\newcommand{\tf}{time-frequency}
\newcommand{\tfs}{time-frequency shift}
\newtheorem{theorem}{Theorem}[section]
\newtheorem{lemma}[theorem]{Lemma}
\newtheorem{corollary}[theorem]{Corollary}
\newtheorem{proposition}[theorem]{Proposition}
\newtheorem{definition}[theorem]{Definition}
\newtheorem{example}[theorem]{Example}
\newtheorem{remark}[theorem]{Remark}
\newcommand{\beqa}{\begin{eqnarray*}}
\newcommand{\eeqa}{\end{eqnarray*}}
\DeclareMathOperator*{\Lift}{Lift}
\newcommand{\field}[1]{\mathbb{#1}}
\newcommand{\bR}{\field{R}}        %  real numbers
\newcommand{\bC}{\field{C}}        %  complex numbers
\def\la{\lambda}
\def\cF{\mathcal{F}}              % Calligraphic Letters
\def\cS{\mathcal{S}}
\def\cD{\mathcal{D}}
\def\cE{\mathcal{E}}
\def\cM{\mathcal{M}}
\def\cK{\mathcal{K}}
\def\cA{\mathcal{A}}
\def\rd{\bR^d}
\def\rdd{{\bR^{2d}}}
\def\lrd{L^2(\rd)}
\def\R{\right)}
\def\<{\left<}
\def\>{\right>}
\def\mv1{M_v^1}
\def\mpq{M^{p,q}}
\def\Mmpq{M_m^{p,q}}
\def\phas{(x,\xi )}
\def\mn{(m,n)}
\def\mn'{(m',n')}
\newcommand{\norm}[1]{\lVert#1\rVert}
\def\R{\mathbb{R}}
\def\Ren{\mathbb{R}^d}
\def\sch{\mathcal{S}}
\def\Sn2{S_{2}(L^{2}(\Ren))}
\def\S1{S_{1}(L^{2}(\Ren))}
\def\sig00{\sigma_{0,0}}
\def\la{\langle}
\def\ra{\rangle}
\begin{document}
\begin{abstract} 
We provide a comprehensive overview of the theoretical framework surrounding modulation spaces and their characterizations, particularly focusing on the role of metaplectic operators and time-frequency representations.
We highlight the metaplectic action which is hidden in their construction and guarantees equivalent (quasi-)norms for such spaces.  In particular, this work provides new characterizations via the submanifold of shift-invertible symplectic matrices. Similar results hold for the Wiener amalgam spaces.
\end{abstract}

\title[Excursus on modulation spaces]{Excursus on modulation spaces via metaplectic  operators and related time-frequency representations}

\author{Elena Cordero}
\address{Universit\`a di Torino, Dipartimento di Matematica, via Carlo Alberto 10, 10123 Torino, Italy}
\email{elena.cordero@unito.it}
\author{Gianluca Giacchi}
\address{Università di Bologna, Dipartimento di Matematica,  Piazza di Porta San Donato 5, 40126 Bologna, Italy; Institute of Systems Engineering, School of Engineering, HES-SO Valais-Wallis, Rue de l'Industrie 21, 1950 Sion, Switzerland; Lausanne University Hospital and University of Lausanne, Lausanne, Department of Diagnostic and Interventional Radiology, Rue du Bugnon 46, Lausanne 1011, Switzerland. The Sense Innovation and Research Center, Avenue de Provence 82
1007, Lausanne and Ch. de l’Agasse 5, 1950 Sion, Switzerland.}
\email{gianluca.giacchi2@unibo.it}
%\author{Luigi Rodino}
%\address{}
%\email{}

\thanks{}
\subjclass[2010]{42C15,42B35,42A38}

%\subjclass{35S05,35S30,
%47G30, 42C15}
%\date{}
\keywords{Frames, time-frequency analysis, modulation spaces, Wiener amalgam spaces, time-frequency representations, metaplectic group, symplectic group}
\maketitle

\section{Introduction}

This work is a detailed overview of the theory surrounding modulation spaces and their characterization using metaplectic operators and time-frequency representations. It covers the historical development of modulation spaces from their introduction by Feichtinger in 1983  to recent advancements in the field and provides new results in this framework.  \par
Here's a breakdown of the main points discussed:
\begin{enumerate}
	\item[] \textbf{Background:} We  introduce modulation spaces, which are fundamental in various disciplines including time-frequency analysis, PDEs, quantum mechanics, and signal analysis. 
	\item[] \textbf{Metaplectic Operators and Invariance Properties:} We discuss the importance of metaplectic operators and their invariance properties in characterizing modulation spaces, presenting results regarding the conditions under which metaplectic operators extend to isomorphisms on modulation spaces.
	\item[] \textbf{Time-Frequency Representations:} We present various time-frequency representations, including the short-time Fourier transform (STFT) and the Wigner distribution, which arise as images of metaplectic operators and are called  metaplectic Wigner distributions. We highlight their role in characterizing modulation spaces and provide a reproducing formula for these representations.
	\item[] \textbf{Characterization of Modulation Spaces:} 
	We  exhibit characterizations of modulation spaces using  metaplectic Wigner distributions. These characterizations involve shift-invertible properties and block decompositions of symplectic matrices.
	\item[] \textbf{Independence of Symplectic Matrix Blocks:} We showcase a new result regarding the independence of certain blocks of symplectic matrices in the characterization of modulation spaces using metaplectic Wigner distributions.
\end{enumerate}

\vspace{0.3truecm}
Modulation and Wiener amalgam spaces  were introduced by Feichtinger in  \cite{F1} (see \cite{Feichtinger_1981_Banach,Feichtinger_1990_Generalized} for the Wiener case) in the framework of time-frequency analysis and extended to the quasi-Banach setting  by Galperin and Samarah \cite{Galperin2004} in 2004. Since the 2000s they have become popular in many different environments, ranging from PDEs to quantum mechanics, pseudo-differential theory and signal analysis. Hundreds of papers use these spaces as a natural framework. Since it is impossible to cite them all, we simply refer to the textbooks  \cite{KB2020,Elena-book,book}. 

Recently, a series of contributions have highlighted the importance of metaplectic operators and related time-frequency representations in the (quasi-)norm characterization of these spaces \cite{CNT2019,CGshiftinvertible,CGR2022,CR2021,CR2022,Fuhr,Gos11,grohs2022,ZJQ21,Zhang21bis,Zhang21}, providing applications in signal analysis, phase retrieval, and PDE's.
 
A key point in this study, as observed in \cite{Fuhr}, is  the \emph{invariance properties} of metaplectic operators, asking under which conditions a metaplectic operator  $\hat{A}$, initially defined as a unitary operator on $\lrd$, extends to an isomorphism on the modulation spaces $\Mmpq(\rd)$.
Recall that the metaplectic group $Mp(d,\bR)$ is the two-fold cover of the symplectic group $Sp(d,\bR)$, i.e., there exists a surjective Lie group homomorphism $\pi^{Mp}: Mp(d,\bR)\to Sp(d,\bR)$. 

The translation and modulation operators are defined by		$$
T_xf(t)=f(t-x),\quad M_{\xi}f(t)= e^{2\pi i \xi
	t}f(t), \, t,x,\xi\in\rd.
$$
Their composition yields the so-called \emph{time-frequency shift}
$$
\pi(z)f=\pi\phas f=M_\xi T_xf(t)=e^{2\pi i \xi
	t}f(t-x),\quad z=\phas\in\rdd.
$$
Modulation spaces $M^{p,q}_m(\rd)$ are classically defined in terms of the short-time Fourier transform (STFT), i.e.,
\begin{equation}\label{STFTp}
	V_gf(x,\xi)=\la f,\pi\phas g\ra=\int_{\rd}f(t)\bar g(t-x)e^{-2\pi i\xi t}dt, \qquad f\in L^2(\rd), \ x,\xi\in\rd, 
\end{equation}
where $g\in \cS(\rd)\setminus\{0\}$ is the so-called window function. The definition can be extended to $(f,g)\in\cS'(\rd)\times\cS(\rd)$, cf. Section \ref{subsec:23} for details.

 Modulation spaces $M^{p,q}_m(\rd)$ are subclasses of tempered distributions $f\in\cS'(\rd)$, with
\[
f\in M^{p,q}_m(\rd) \quad \Longleftrightarrow \quad V_gf\in L^{p,q}_m(\rdd). 
\]

Recently,  F\"uhr and  Shafkulovska  \cite[Theorem 3.2]{Fuhr} (see \cite{CGshiftinvertible} for the quasi-Banach setting)  proved the following:
\begin{theorem}\label{thmFuhr} Consider $0<p,q\leq\infty$, $\hat A\in Mp(d,\bR)$, $\pi^{Mp}(\hat{A})=A\in Sp(d,\bR)$. The following statements are equivalent:\\
	(i) $\hat{A}: \mpq\to \mpq$ is well defined;\\
	(ii) $\hat{A}: \mpq\to \mpq$ is well defined and bounded (in fact, it is an isomorphism);\\
	(iii) One out of the two conditions below holds:\\
	\hspace*{0.3cm} \,\,\,\,(iii.1) $p=q$,\\
	\hspace*{0.5truecm} (iii.2) $p\not=q$ and $A$ is an upper triangular matrix.\\
	\end{theorem}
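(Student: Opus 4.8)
The plan is to reduce every assertion to the symplectic covariance of the STFT and then to isolate the role of the lower-left block $A_{21}$ in the block decomposition $A=\left(\begin{smallmatrix}A_{11}&A_{12}\\A_{21}&A_{22}\end{smallmatrix}\right)$. The starting point is the intertwining relation $\hat A\,\pi(z)\,\hat A^{-1}=c(z)\,\pi(Az)$ (with $|c(z)|=1$), which gives $|V_{\hat Ag}(\hat Af)(z)|=|V_gf(A^{-1}z)|$ for all $z\in\rdd$. Thus $\hat A$ is bounded on $\mpq$ \fif\ the substitution $F\mapsto F\circ A^{-1}$ is bounded on the STFT side in the $L^{p,q}$-(quasi-)norm. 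The implication (ii)$\Rightarrow$(i) is trivial, while (i)$\Rightarrow$(ii) follows from the closed graph theorem: $\mpq$ is a complete (quasi-)Banach space continuously embedded in $\cS'(\rd)$, and $\hat A$ is continuous on $\cS'(\rd)$, so a well-defined $\hat A:\mpq\to\mpq$ has closed graph and is therefore bounded.

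For (iii)$\Rightarrow$(ii) I would split into the two cases. If $p=q$, then $L^{p,p}(\rdd)$ is invariant under every measure-preserving map; since $\det A=1$, the substitution $F\mapsto F\circ A^{-1}$ is an isometry, and $\hat A$ is an isometry on $M^{p,p}$ for \emph{every} $A\in\Spnr$. If $p\neq q$ and $A$ is upper triangular ($A_{21}=0$), then $A^{-1}(x,\xi)=(A_{11}^{-1}x-A_{11}^{-1}A_{12}A_{22}^{-1}\xi,\ A_{22}^{-1}\xi)$: the frequency variable transforms among itself, while position picks up only a frequency-dependent shear. In the $L^{p,q}$-norm the inner $L^p$-integral runs over position at fixed $\xi$, so this shear is absorbed by a translation, and the outer $L^q$-integral sees only $\xi\mapsto A_{22}^{-1}\xi$. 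A two-step change of variables, together with the symplectic identity $A_{11}^{T}A_{22}=I$, yields $\|F\circ A^{-1}\|_{L^{p,q}}=|\det A_{11}|^{1/p-1/q}\,\|F\|_{L^{p,q}}$, proving boundedness.

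The core is (i)$\Rightarrow$(iii): assuming $p\neq q$, I must show $A_{21}=0$. The key device is the dilation group $\hat D_\lambda$, with symplectic projection $\delta_\lambda=\mathrm{diag}(\lambda I,\lambda^{-1}I)$. Being block-diagonal (hence upper triangular), each $\hat D_\lambda$ is an isomorphism of $\mpq$ with $\|\hat D_\lambda h\|_{\mpq}=\lambda^{d/p-d/q}\|h\|_{\mpq}$ up to constants independent of $\lambda$, by the previous step. Factoring $\hat A\hat D_\lambda=\pm\hat D_\lambda\,\widehat{(\delta_\lambda^{-1}A\delta_\lambda)}$ (the sign being irrelevant for norms) and cancelling the dilation factor, boundedness of $\hat A$ forces
\[
\sup_{\lambda>0}\frac{\|\widehat{(\delta_\lambda^{-1}A\delta_\lambda)}\,g\|_{\mpq}}{\|g\|_{\mpq}}<\infty
\]
for every fixed $g$. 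A direct computation gives $\delta_\lambda^{-1}A\delta_\lambda=\left(\begin{smallmatrix}A_{11}&\lambda^{-2}A_{12}\\ \lambda^{2}A_{21}&A_{22}\end{smallmatrix}\right)$, so the offending block is amplified by $\lambda^{2}$ as $\lambda\to\infty$. Taking $g$ to be the standard Gaussian, the image $\widehat{(\delta_\lambda^{-1}A\delta_\lambda)}\,g$ is again a (chirped) Gaussian whose STFT modulus is an explicit phase-space Gaussian $e^{-\pi\langle Q_\lambda z,z\rangle}$; when $A_{21}\neq0$ the matrix $Q_\lambda$ becomes increasingly sheared across the position/frequency splitting, and the $L^{p,q}$-norm of such a Gaussian diverges like a positive power of $\lambda$ precisely because $p\neq q$. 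This contradicts the displayed bound, forcing $A_{21}=0$.

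The main obstacle is this last asymptotic estimate: one must show that, for $p\neq q$ and $A_{21}\neq0$, the $L^{p,q}$-(quasi-)norm of the sheared Gaussian $e^{-\pi\langle Q_\lambda z,z\rangle}$ genuinely diverges rather than merely changing by a bounded factor. I expect the cleanest route is to pass to symplectic coordinates that diagonalize the action of $A_{21}$ on a two-dimensional time-frequency plane, reducing to a one-dimensional model Gaussian whose anisotropic mixed norm can be evaluated explicitly. The hypothesis $p\neq q$ is exactly what prevents the mixed norm from being shear-invariant in that plane, in contrast to the upper-triangular case, where only a position shear occurs and is harmlessly absorbed by the inner integral.
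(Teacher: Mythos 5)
Your reduction via the covariance relation $|V_{\hat A g}(\hat A f)(z)|=|V_gf(A^{-1}z)|$, the closed-graph argument for (i)$\Rightarrow$(ii), the measure-preservation argument for $p=q$, and the explicit change of variables giving $\|F\circ A^{-1}\|_{L^{p,q}}=|\det A_{11}|^{1/p-1/q}\|F\|_{L^{p,q}}$ for block upper triangular $A$ all match the route the paper takes (equation \eqref{e1}, the diagram \eqref{diagramma}, and Theorem \ref{e4}); these parts are fine. The paper itself does not prove the hard implication --- it quotes it from F\"uhr--Shafkulovska and only records the model counterexample $D_J(f\otimes g)=g\otimes f$ with radial $f\in L^p\setminus L^q$, $g\in L^q\setminus L^p$ --- so your attempt to actually establish (i)$\Rightarrow$(iii) is where the review must focus, and there are two genuine gaps there.

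First, the cancellation step is not justified. You claim $\|\hat D_\lambda h\|_{M^{p,q}}=\lambda^{d/p-d/q}\|h\|_{M^{p,q}}$ \emph{up to constants independent of} $\lambda$ ``by the previous step''. The previous step is an identity on $L^{p,q}(\rdd)$; transporting it to $M^{p,q}$ gives $\|V_g(\hat D_\lambda h)\|_{L^{p,q}}=\lambda^{\pm(d/p-d/q)}\|V_{\hat D_{1/\lambda}g}h\|_{L^{p,q}}$, i.e.\ the modulation norm of $h$ with respect to the $\lambda$-dependent window $\hat D_{1/\lambda}g$. The equivalence of that norm with the fixed-window norm carries a constant controlled by $\|V_g(\hat D_{1/\lambda}g)\|_{L^1_v}$, which grows polynomially in $\lambda+\lambda^{-1}$. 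Consequently $\|\hat D_\lambda^{-1}\|_{op}\|\hat A\|_{op}\|\hat D_\lambda\|_{op}$ is not obviously bounded in $\lambda$, and your displayed uniform bound on $\|\widehat{(\delta_\lambda^{-1}A\delta_\lambda)}g\|_{M^{p,q}}$ does not follow. The standard fix is not to conjugate at all: test $\hat A$ directly on the dilated Gaussians $\phi_\lambda=\hat D_\lambda\phi$ and compare $\|V_\phi(\hat A\phi_\lambda)\|_{L^{p,q}}$ with $\|V_\phi\phi_\lambda\|_{L^{p,q}}$, both computed with the \emph{fixed} window $\phi$.

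Second, and more seriously, the crux --- that the $L^{p,q}$ quasi-norm of the resulting phase-space Gaussian $e^{-\pi\langle Q_\lambda z,z\rangle}$ diverges when $A_{21}\neq 0$ and $p\neq q$ --- is only asserted (``I expect the cleanest route\dots''). This is exactly the hard computation, and it is not a formality: completing the square gives, in the one-dimensional model, $\|e^{-\pi\langle Qz,z\rangle}\|_{L^{p,q}}\asymp_{p,q}(\det Q)^{-1/(2q)}\,Q_{11}^{\,1/(2q)-1/(2p)}$, so with $\det Q_\lambda$ pinned by the $L^2$ normalization the norm behaves like $Q_{11}(\lambda)^{1/(2q)-1/(2p)}$, which can tend to $0$ just as easily as to $\infty$ depending on the sign of $1/q-1/p$ and the direction in which $Q_{11}(\lambda)$ drifts. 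You must therefore track the asymptotics of the relevant block of $Q_\lambda$ for $\hat A\phi_\lambda$ \emph{relative to} that for $\phi_\lambda$ (or run $\lambda$ to both $0$ and $\infty$) and show the ratio of norms is unbounded; this depends on all four blocks of $A$, not just on $A_{21}\neq 0$ in isolation. Until that estimate is carried out, the necessity direction is not proved. A shorter alternative, closer to the example the paper records, is the Bruhat-type factorization $A=N_1(J_k\oplus I)N_2$ with $N_1,N_2$ block upper triangular and $k=\mathrm{rank}\,A_{21}>0$, which reduces unboundedness to that of a partial Fourier transform and hence to the tensor-product counterexample $f\otimes g\mapsto g\otimes f$.
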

	This result can be lifted to weighted modulation spaces $\Mmpq$ if the weight $m$ satisfies $m\asymp m\circ A^{-1}$.

	From \cite[Lemma 9.4.3]{book}  we infer the action of the STFT on the metaplectic operator $\hat{A}$ such that $\pi^{Mp}(\hat{A})=A$:
	\begin{equation}\label{e1}
	|V_g (\hat{A}f)\phas|=	|V_{\hat{A}^{-1}g} f(A^{-1}\phas)|.
	\end{equation}
 
The equality above shows the interaction between $\hat{A}: \mpq(\rd) \to \mpq(\rd)$ and $D_A: L^{p,q}(\rdd)\to L^{p,q}(\rdd)$, where $D_A F(z):=F(A^{-1}z)$, highlighted by the diagram below. 

%%%%% !!!!!
\begin{equation}\label{diagramma}
\begin{tikzcd}
	M^{p,q}(\rd) \arrow[rr,"V_g"] \arrow[d,"\hat A"] & 	& V_g(M^{p,q}(\rd)) \arrow[rr,hook] \arrow[d,dashrightarrow] &	& L^{p,q}(\rdd) \arrow[d,"D_A"]\\ 
	M^{p,q}(\rd) \arrow[rr,"V_{\hat A^{-1}g}"] & 		& V_{\hat A^{-1}g}(M^{p,q}(\rd)) \arrow[rr,hook] 	&	& L^{p,q}(\rdd)
\end{tikzcd}
\end{equation}

Then, Theorem \ref{thmFuhr} is a direct consequence of the result below \cite[Theorem 3.3]{Fuhr}.

\begin{theorem}\label{e3}
	For $0<p,q\leq\infty$, $A\in Sp(d,\bR)$, the following statements are equivalent:\\
	(i) $D_A: L^{p,q}(\rdd)\to L^{p,q}(\rdd)$ is everywhere defined.\\
	(ii) $D_A: L^{p,q}(\rdd)\to L^{p,q}(\rdd)$ is everywhere defined and bounded.\\
	(iii) $D_A: V_{\hat{A}^{-1}g}({M}^{p,q})\to L^{p,q}(\rdd)$ is everywhere defined.\\
	(iv) $D_A: V_{\hat{A}^{-1}g}({M}^{p,q})\to L^{p,q}(\rdd)$ is everywhere defined and bounded.\\
	(v)  One out of the two conditions below holds:\\
	\hspace*{0.3cm} \,\,\,\,(v.1) $p=q$, \\ 
	\hspace*{0.5truecm} (v.2) $p\not=q$ and $A$ is an upper triangular matrix.\\
\end{theorem}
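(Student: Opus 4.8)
The plan is to reduce the five-fold equivalence to two substantive implications, (v)$\Rightarrow$(ii) and (iii)$\Rightarrow$(v), with the remaining links being either trivial or instances of the closed graph theorem. Indeed, the inclusion $V_{\hat A^{-1}g}(\mpq)\subseteq L^{p,q}(\rdd)$ together with ``bounded $\Rightarrow$ everywhere defined'' gives at once (ii)$\Rightarrow$(i)$\Rightarrow$(iii) and (ii)$\Rightarrow$(iv)$\Rightarrow$(iii), so (ii) is the strongest and (iii) the weakest of the first four statements. To collapse ``everywhere defined'' onto ``bounded'' I would invoke the closed graph theorem: $D_A$ is a closed operator, since $L^{p,q}$-convergence forces a.e. convergence along subsequences and $F\mapsto F(A^{-1}\cdot)$ commutes with a.e. limits (as $A^{-1}$ preserves null sets). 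Because $L^{p,q}(\rdd)$ and the image $V_{\hat A^{-1}g}(\mpq)$ are complete metrizable (the latter is closed in $L^{p,q}$, being the image of the complete space $\mpq$ under the topological embedding $V_{\hat A^{-1}g}$), this yields (i)$\Rightarrow$(ii) and (iii)$\Rightarrow$(iv). Hence it suffices to prove (v)$\Rightarrow$(ii) and (iii)$\Rightarrow$(v).

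For (v)$\Rightarrow$(ii), write $A=\left(\begin{smallmatrix}A_{11}&A_{12}\\A_{21}&A_{22}\end{smallmatrix}\right)$ in $d\times d$ blocks, so that ``upper triangular'' means $A_{21}=0$. If $p=q$ then $L^{p,q}=L^p$ and, since $|\det A|=1$ for $A\in Sp(d,\bR)$, the change of variables $z\mapsto A^{-1}z$ shows that $D_A$ is an isometry. If instead $A_{21}=0$, the symplectic relations force $A_{11}^TA_{22}=I$, so $A_{11},A_{22}$ are invertible and $A^{-1}=\left(\begin{smallmatrix}A_{22}^T&-A_{12}^T\\0&A_{11}^T\end{smallmatrix}\right)$ is again block upper triangular. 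Then $D_AF(x,\xi)=F(A_{22}^Tx-A_{12}^T\xi,\,A_{11}^T\xi)$, so the new frequency variable depends on $\xi$ alone; Fubini, together with the inner substitution $u=A_{22}^Tx-A_{12}^T\xi$ at fixed $\xi$ and the outer substitution $\eta=A_{11}^T\xi$, gives $\|D_AF\|_{L^{p,q}}=|\det A_{22}|^{-1/p}|\det A_{11}|^{-1/q}\|F\|_{L^{p,q}}$, whence boundedness (in fact an isomorphism).

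The core is (iii)$\Rightarrow$(v), which I would prove in contrapositive: assuming $p\neq q$ and $A_{21}\neq0$, I exhibit a failure of (iii). The mechanism is that the mixed (quasi-)norm $L^{p,q}$ with $p\neq q$ is not invariant under the shears produced by $A^{-1}$ when its lower-left block $-A_{21}^T$ is nonzero. This is already visible in the model $d=1$, $A^{-1}=\left(\begin{smallmatrix}1&0\\c&1\end{smallmatrix}\right)$ with $c\neq0$, where $D_AF(x,\xi)=F(x,\xi+cx)$ maps axis-aligned rectangles to slanted ones. For $p>q$, testing $F=\chi_{[0,R]\times[0,1]}$ gives $\|F\|_{L^{p,q}}\asymp R^{1/p}$ while $\|D_AF\|_{L^{p,q}}\asymp R^{1/q}$, so $\|D_AF\|/\|F\|\asymp R^{1/q-1/p}\to\infty$; for $p<q$, the slanted strip $F=\chi_{\{0\le x\le R,\,|\xi-cx|\le1\}}$ satisfies $D_AF=\chi_{[0,R]\times[-1,1]}$ and yields $\|D_AF\|/\|F\|\asymp R^{1/p-1/q}\to\infty$. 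Either way $D_A$ is unbounded on $L^{p,q}$.

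Two points require care and constitute the main obstacle. First, for general $d$ and general $A_{21}\neq0$ one must reduce the full composition $F\mapsto F(A^{-1}\cdot)$ to the planar shear above: I would select indices $i,j$ with $(A_{21})_{ij}\neq0$ and build test functions reproducing the one-dimensional extremal shapes in the critical pair $(x_i,\xi_j)$ while being fixed bumps in the complementary variables, so that the mixed-norm estimate factorizes and collapses to the model computation. Second, and more delicately, (iii) only asserts definedness on the STFT image $V_{\hat A^{-1}g}(\mpq)$, so the counterexample must be realized there rather than in all of $L^{p,q}$. Here I would use the intertwining \eqref{e1}, which gives $|D_A(V_{\hat A^{-1}g}f)|=|V_g(\hat Af)|$ and thereby identifies the failure of (iii) with the failure of $\hat A$ to preserve $\mpq$; combining the metaplectic covariance $\hat A\,\pi(z)=c_z\,\pi(Az)\,\hat A$ with a Gabor-atom expansion, whose $\mpq$-norm is comparable to the $\ell^{p,q}$-norm of its coefficients over a lattice, one transfers the shear-noninvariance of $L^{p,q}$ into the blow-up of a sequence $\|\hat Af_n\|_{\mpq}/\|f_n\|_{\mpq}$. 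Making this transfer quantitative, so that the smooth, phase-space-localized STFTs genuinely emulate the rectangular extremizers, is the step I expect to demand the most work.
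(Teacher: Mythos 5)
A preliminary remark: the paper does not actually prove Theorem \ref{e3} --- it is quoted from \cite[Theorem 3.3]{Fuhr}, and the text only records two ingredients of that proof, namely the tensor-product counterexample for $D_J$ and the $L^{p,q}$-invariance of upper-triangular dilations (Theorem \ref{e4}). Measured against those ingredients, the parts of your argument that you complete are correct. The trivial implications among (i)--(iv) are right; the closed-graph reduction of ``everywhere defined'' to ``bounded'' is legitimate for all $0<p,q\le\infty$, because $L^{p,q}(\rdd)$ and the image $V_{\hat A^{-1}g}(\mpq)$ (an isometric, hence closed and complete, copy of $\mpq$) are F-spaces, and $D_A$ has closed graph since quasi-norm convergence yields a.e.\ convergence along subsequences and $A^{-1}$ preserves null sets. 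Your (v)$\Rightarrow$(ii) is exactly Theorem \ref{e4} combined with $|\det A|=1$ in the case $p=q$, and the formula $A^{-1}=\left(\begin{smallmatrix} A_{22}^T & -A_{12}^T\\ -A_{21}^T & A_{11}^T\end{smallmatrix}\right)$ for symplectic $A$ is correct, as are the two model computations for the planar shear.

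The genuine gap is (iii)$\Rightarrow$(v), and you have located it yourself without closing it; as written, every implication you actually establish points away from (v), so the cycle of equivalences is never closed. Two steps are missing. First, the reduction of a general symplectic $A$ with $A_{21}\neq 0$ to the planar shear: selecting $(i,j)$ with $(A_{21})_{ij}\neq 0$ and placing fixed bumps in the complementary variables is not automatic, because $A^{-1}$ shears those variables too, so after the inner $L^p$-integration the resulting $\xi$-profile need not factorize; the paper's own example (radial $f\in L^p\setminus L^q$, $g\in L^q\setminus L^p$, $D_J(f\otimes g)=g\otimes f$) treats only $A=J$, and the general case in \cite{Fuhr} passes through a normal form for non-upper-triangular symplectic matrices rather than an entrywise selection. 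Second, and decisively, (iii) quantifies only over $V_{\hat A^{-1}g}(\mpq)$: your use of \eqref{e1} correctly converts the failure of (iii) into the existence of $f\in\mpq(\rd)$ with $V_g(\hat A f)\notin L^{p,q}(\rdd)$, but that is precisely the contrapositive of the hard implication of Theorem \ref{thmFuhr} --- the statement this whole circle of results is designed to deliver --- so the reduction by itself gains nothing. The proposed Gabor-atom emulation of the indicator extremizers is a plausible program (F\"uhr--Shafkulovska instead work with explicit Gaussian computations and exhibit a sequence with blowing-up norm quotient), but until it is carried out quantitatively the implication (iii)$\Rightarrow$(v), and with it (i)$\Rightarrow$(v), remains unproven.
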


A clarifying example, provided by \cite{Fuhr}, is the following:
for $p\not=q$, consider two radially symmetric functions $f\in L^p(\rd)\setminus L^q(\rd)$ and $g\in L^q(\rd)\setminus L^p(\rd)$, and consider the standard symplectic matrix:
\begin{equation}\label{defJ}
	J=\begin{pmatrix} 0_{d\times d} & I_{d\times d} \\ -I_{d\times d} & 0_{d\times d} \end{pmatrix},
\end{equation}
where $I_{d\times d}\in\bR^{d\times d}$ is the identity matrix, whereas $0_{d\times d}$ is the matrix of $\bR^{d\times d}$ having all zero entries. The tensor product $(f\otimes g)\phas=f(x)g(\xi)$ is in $L^{p,q}(\rdd)$, but $(g\otimes f)\phas=D_J(f\otimes g)\phas\notin L^{p,q}(\rdd)$, hence $D_J: L^{p,q}(\rdd)\to L^{p,q}(\rdd)$ is not well defined.

The proof of Theorem \ref{e3} for $p\not=q$ lies on the following issue:
\begin{theorem}\label{e4}
	If $S\in GL(d,\bR)$ is an upper triangular matrix and $0<p,q\leq\infty$, then
	$$D_S: L^{p,q}(\rdd)\to L^{p,q}(\rdd)$$
	is, up to a constant $C_S>0$, a norm-preserving isomorphism with bounded inverse $D_S^{-1}=D_ {S^{-1}}$.
\end{theorem}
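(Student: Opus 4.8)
The plan is to prove the statement by a single change of variables, exhibiting $D_S$ as an exact dilation of the mixed (quasi-)norm and isolating upper-triangularity as exactly the feature that decouples the iterated integral defining $\norm{\cdot}_{L^{p,q}}$. Writing $z=\phas\in\rd\times\rd$ and decomposing the action of $S$ with respect to this splitting into $d\times d$ blocks
$$S=\begin{pmatrix} S_{11} & S_{12}\\ S_{21} & S_{22}\end{pmatrix},$$
upper-triangularity forces $S_{21}=0$, and invertibility of $S$ then gives $S_{11},S_{22}\in GL(d,\bR)$. Consequently $S^{-1}$ is again upper triangular, with diagonal blocks $S_{11}^{-1},S_{22}^{-1}$, and $S^{-1}z=(S_{11}^{-1}x+c(\xi),\,S_{22}^{-1}\xi)$ where $c(\xi):=-S_{11}^{-1}S_{12}S_{22}^{-1}\xi$ depends on $\xi$ alone. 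The structural point I want to record is that, because $S_{21}=0$, the \emph{last} $d$ coordinates of $S^{-1}z$ are independent of $x$; this is precisely what will let the inner and outer norms separate.

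Next I would compute $\norm{D_SF}_{L^{p,q}}$ for $p,q<\infty$, the remaining cases being identical. Recalling $\norm{F}_{L^{p,q}}^q=\int_{\rd}\norm{F(\cdot,\xi)}_{L^p}^q\,d\xi$, I fix $\xi$ and treat the inner $L^p$-integral in $x$,
$$\int_{\rd}\abs{F(S_{11}^{-1}x+c(\xi),\,S_{22}^{-1}\xi)}^p\,dx,$$
via the affine substitution $u=S_{11}^{-1}x+c(\xi)$: the $\xi$-dependent translation $c(\xi)$ does not affect the integral by translation-invariance of Lebesgue measure, while the linear part yields the Jacobian $\abs{\det S_{11}}$. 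Hence the inner factor equals $\abs{\det S_{11}}^{1/p}\norm{F(\cdot,S_{22}^{-1}\xi)}_{L^p(\rd)}$. Inserting this into the outer $L^q$-integral and substituting $v=S_{22}^{-1}\xi$ (Jacobian $\abs{\det S_{22}}$) gives
$$\norm{D_SF}_{L^{p,q}}=\abs{\det S_{11}}^{1/p}\,\abs{\det S_{22}}^{1/q}\,\norm{F}_{L^{p,q}},$$
so $D_S$ scales the (quasi-)norm by $C_S:=\abs{\det S_{11}}^{1/p}\abs{\det S_{22}}^{1/q}>0$. Since this is an exact equality obtained through changes of variables only, no triangle inequality is used and the computation is valid verbatim in the quasi-Banach range $0<p,q<1$. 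The endpoints $p=\infty$ and/or $q=\infty$ follow the same scheme with the essential supremum replacing the integral, which is invariant under the affine substitutions (no Jacobian), in agreement with the convention $1/\infty=0$ in $C_S$.

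Finally, for the isomorphism claim I would invoke the elementary composition rule $D_SD_T=D_{ST}$, which gives $D_SD_{S^{-1}}=D_{\Id}=\Id$, hence $D_S^{-1}=D_{S^{-1}}$; since $S^{-1}$ is upper triangular as well, the estimate just proved applies to it and shows $D_{S^{-1}}$ is bounded, so $D_S$ is a topological isomorphism with the stated norm-preservation. I expect the only genuinely delicate point to be the role of upper-triangularity itself: it is exactly the vanishing of the lower-left block $S_{21}$ that keeps the second group of coordinates of $S^{-1}z$ independent of $x$ and thereby lets the inner and outer norms factor. Were $S_{21}\neq0$, the new frequency variable would couple to $x$, the iterated integral would no longer separate, and—consistently with the example $D_J(f\otimes g)=g\otimes f$ recorded above—boundedness on $L^{p,q}$ would genuinely fail for $p\neq q$.
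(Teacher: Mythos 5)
Your proof is correct and is exactly the standard argument: the paper states this result without reproducing a proof (it is quoted from F\"uhr--Shafkulovska and reappears as Proposition \ref{thmA12} from \cite{CGframes}), and the underlying computation there is the same iterated change of variables you perform, with the vanishing lower-left block decoupling the inner $L^p$- and outer $L^q$-integrals and yielding the exact constant $\abs{\det S_{11}}^{1/p}\abs{\det S_{22}}^{1/q}$. Your reading of ``upper triangular'' as block upper triangular with respect to the $(x,\xi)$ splitting of $\rdd$ is the intended one (cf.\ \eqref{uppertr}), and your observation that the argument uses no triangle inequality, hence survives in the quasi-Banach range $0<p,q<1$, is precisely why the result extends beyond the Banach case.
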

Notice that the matrix $S$ does not need to be symplectic, but only invertible.

These results have inspired new characterizations of modulation spaces via time-frequency distributions built by using metaplectic operators: the so-called  metaplectic Wigner distributions, defined as follows.

For $\hat\cA\in Mp(2d,\bR)$, with $\pi^{Mp}(\hat{\cA})=\cA\in Sp(2d,\bR)$ (notice that the number of variables is doubled!) we call the metaplectic Wigner distribution $	W_\cA(f,g)$ the time-frequency representation
\begin{equation}\label{WApre}
	W_\cA(f,g)=\hat\cA(f\otimes \bar g),\quad f,g\in\lrd.
\end{equation}

Examples are the STFT, the (cross-)$\tau$-Wigner distribution:
\begin{equation}\label{tauWigner}
W_\tau(f,g)(x,\xi)=\int_{\rd}f(x+\tau t)\bar g(x-(1-\tau)t) e^{-2\pi i\xi t}dt, \qquad x,\xi\in\rd;
\end{equation}
 in particular, the Wigner transform,  defined  as
\begin{equation}\label{Wignerp}
	W(f,g)(x,\xi)=\int_{\rd}f\left(x+\frac{t}{2}\right)\bar g\left(x-\frac{t}{2}\right) e^{-2\pi i\xi t}dt, \quad f,g\in\lrd,\quad x,\xi\in\rd.
\end{equation}
For their related metaplectic operators $\hat{\cA}$ and symplectic matrix $\pi^{Mp}(\hat{\cA})=\cA$ we refer to the following section.

Similarly to the STFT, these time-frequency representations enjoy a \emph{reproducing formula}, cf. \cite[Lemma 3.6]{CGshiftinvertible}:
\begin{lemma}\label{intertFormula}
Consider $\hat\cA\in Mp(2d,\bR)$, with $\pi^{Mp}(\hat{\cA})=\cA\in Sp(2d,\bR)$, $\gamma,g\in\cS(\rd)$  such that $\langle \gamma,g\rangle\neq0$ and $f\in\cS'(\rd)$. Then,
	\begin{equation}\label{e6}
		W_\cA(f,g)=\frac{1}{\langle\gamma, g\rangle}\int_{\rdd}V_gf(w)W_\cA(\pi(w)\gamma,g) dw,
	\end{equation}
	with equality in $\cS'(\rdd)$, the integral being intended in the weak sense.
\end{lemma}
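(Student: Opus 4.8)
The plan is to derive \eqref{e6} from the classical (weak) inversion formula for the STFT by applying the linear map $f\mapsto W_\cA(f,g)$ to both sides. First, I would recall the standard reproducing formula for the STFT: for $\gamma,g\in\cS(\rd)$ with $\langle\gamma,g\rangle\neq0$ and $f\in\cS'(\rd)$,
\begin{equation}\label{STFTinv}
f=\frac{1}{\langle\gamma,g\rangle}\int_{\rdd} V_g f(w)\,\pi(w)\gamma\,dw,
\end{equation}
understood weakly in $\cS'(\rd)$. The integral is legitimate because $V_g f$ is a continuous function of polynomial growth, while pairing $\pi(w)\gamma$ against any fixed test function yields a Schwartz function of $w$.

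The key structural observation is that $W_\cA(\cdot,g)=\hat\cA\circ B_g$, where $B_g\colon h\mapsto h\otimes\bar g$ is continuous and linear from $\cS'(\rd)$ into $\cS'(\rdd)$, and $\hat\cA\in Mp(2d,\bR)$ restricts to a topological isomorphism of $\cS'(\rdd)$. Hence $L:=W_\cA(\cdot,g)$ is a continuous linear operator $\cS'(\rd)\to\cS'(\rdd)$, and its transpose $L^{t}$ maps $\cS(\rdd)$ continuously into $\cS(\rd)$. Rather than commuting $\hat\cA$ with the integral sign in \eqref{STFTinv} directly, I would move $L$ onto the test function via $L^{t}$, reducing the whole identity to the scalar statement \eqref{STFTinv} whose validity is already known. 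Concretely, for an arbitrary $\Phi\in\cS(\rdd)$,
\begin{align}\label{weakpairing}
\langle W_\cA(f,g),\Phi\rangle
&=\langle f,L^{t}\Phi\rangle
=\frac{1}{\langle\gamma,g\rangle}\int_{\rdd} V_g f(w)\,\langle \pi(w)\gamma,L^{t}\Phi\rangle\,dw \nonumber\\
&=\frac{1}{\langle\gamma,g\rangle}\int_{\rdd} V_g f(w)\,\langle W_\cA(\pi(w)\gamma,g),\Phi\rangle\,dw,
\end{align}
which is exactly \eqref{e6} read weakly in $\cS'(\rdd)$. Note that $\pi(w)\gamma\in\cS(\rd)$, so each $W_\cA(\pi(w)\gamma,g)$ is a genuine Schwartz function and all pairings are well defined.

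The main obstacle is the rigorous interchange of the operator $L$ with the $\cS'$-valued integral, and the transpose manoeuvre in \eqref{weakpairing} is precisely what circumvents it: continuity of $\hat\cA$ on tempered distributions guarantees that $L^{t}$ is an honest map $\cS(\rdd)\to\cS(\rd)$, after which only the scalar identity \eqref{STFTinv} tested against $L^{t}\Phi$ is invoked. The remaining point to verify is absolute convergence, which follows since $h:=L^{t}\Phi\in\cS(\rd)$ gives $\langle\pi(w)\gamma,h\rangle=\overline{V_\gamma h(w)}\in\cS(\rdd)$, so the integrand in \eqref{weakpairing} is a polynomially bounded function times a Schwartz function; this also licenses the application of Fubini and legitimizes the weak integral in \eqref{e6}.
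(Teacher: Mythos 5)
Your argument is correct and is essentially the standard route to this lemma: the paper itself does not prove it but defers to \cite[Lemma 3.6]{CGshiftinvertible}, where the identity is likewise obtained by combining the weak STFT inversion formula on $\cS'(\rd)$ with the continuity of $f\mapsto W_\cA(f,g)=\hat\cA(f\otimes\bar g)$ from $\cS'(\rd)$ to $\cS'(\rdd)$ (cf.\ Proposition \ref{Folland427}). Your adjoint manoeuvre, reducing everything to the scalar inversion formula tested against $L^{t}\Phi\in\cS(\rd)$ and checking absolute convergence via $\langle\pi(w)\gamma,h\rangle=\overline{V_\gamma h(w)}$, is precisely the right way to make the interchange of $W_\cA(\cdot,g)$ with the weak integral rigorous.
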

From the right-hand side we infer that the key point becomes the action of $W_\cA$ on the time-frequency shift $\pi(w)$, which can be computed explicitly. 
For, assume that $\cA$ has got the block decomposition
 \begin{equation}\label{blockA}
	\cA=\begin{pmatrix}
		A_{11} & A_{12} & A_{13} & A_{14}\\
		A_{21} & A_{22} & A_{23} & A_{24}\\
		A_{31} & A_{32} & A_{33} & A_{34}\\
		A_{41} & A_{42} & A_{43} & A_{44}
	\end{pmatrix}
\end{equation}
and consider its sub-matrix 
\begin{equation}\label{defEA}
	E_\cA=\begin{pmatrix}
		A_{11} & A_{13}\\
		A_{21} & A_{23}
	\end{pmatrix},
\end{equation} 
then
\begin{equation}
			|W_\cA(\pi(w)f,g)(z)|=|W_\cA(f,g)(z-{E_\cA }w)|,\quad f,g\in\lrd.
\end{equation}

The equality above suggests the following definition.
\begin{definition}
	Under the notation above, we say that $W_\cA$ (or, by abuse, $\cA$) is \textbf{shift-invertible} if $E_\cA\in GL(2d,\bR)$.
\end{definition}
The representation formula \eqref{e6} and the invertibility property enjoyed by $E_\cA$ (which plays the role of the matrix $S$ in Theorem \ref{e4})	 are the main ingredients for the characterization of modulation spaces via metaplectic Wigner distributions, proved in Theorem 3.7 of \cite{CGshiftinvertible}:
\begin{theorem} \label{thmF}
	Let $W_\cA$ be shift-invertible with $E_\cA$ upper triangular.  Fix a non-zero window function $g\in \cS(\rd)$. If $m\asymp m\circ E_\cA^{-1}$, $1\leq p,q\leq\infty$,  
	\begin{equation}\label{charmod}
		f\in M^{p,q}_m(\rd) \qquad \Leftrightarrow \qquad W_\cA(f,g)\in L^{p,q}_m(\rdd),
	\end{equation}
	with equivalence of norms.
	
	If $p=q$ the matrix $E_\cA$ does not need to be upper triangular.
\end{theorem}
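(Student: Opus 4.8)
The plan is to reduce the characterization to the already-established machinery: the reproducing formula \eqref{e6}, the action of $W_\cA$ on time-frequency shifts through $E_\cA$, and the norm-preservation of $D_S$ on $L^{p,q}_m$ for upper-triangular $S$ from Theorem \ref{e4}. First I would establish the forward implication, that $f\in M^{p,q}_m(\rd)$ forces $W_\cA(f,g)\in L^{p,q}_m(\rdd)$ with a controlled norm. Starting from \eqref{e6}, I would take (quasi-)norms in $L^{p,q}_m$ and apply a convolution-type estimate: the right-hand side expresses $W_\cA(f,g)$ as a superposition over $w\in\rdd$ of shifted copies $W_\cA(\pi(w)\gamma,g)$ weighted by $V_gf(w)$, and the shift-covariance $|W_\cA(\pi(w)\gamma,g)(z)|=|W_\cA(\gamma,g)(z-E_\cA w)|$ turns this into a twisted convolution in the variable $z$. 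A change of variables $w\mapsto E_\cA w$ (legitimate since $E_\cA\in GL(2d,\bR)$) converts the integral into an ordinary convolution of $|V_gf|\circ E_\cA^{-1}$ against the fixed kernel $|W_\cA(\gamma,g)|$. Here the upper-triangularity of $E_\cA$ is invoked through Theorem \ref{e4}: $D_{E_\cA}$ preserves $L^{p,q}_m$ norms (using $m\asymp m\circ E_\cA^{-1}$ to move the weight), so $\norm{V_gf\circ E_\cA^{-1}}_{L^{p,q}_m}\asymp\norm{V_gf}_{L^{p,q}_m}\asymp\norm{f}_{M^{p,q}_m}$, and a Young-type inequality for $L^{p,q}_m$ (with the kernel in the appropriate amalgam/weighted-$L^1$ space, which holds because $\gamma,g\in\cS$) closes the estimate.

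For the reverse implication I would argue that, conversely, $W_\cA(f,g)\in L^{p,q}_m(\rdd)$ controls $\norm{f}_{M^{p,q}_m}$. The natural route is to run the same reproducing formula in the other direction, or equivalently to exploit that $W_\cA(f,g)$ and $V_gf$ are related by the metaplectic transport encoded in $E_\cA$; since $E_\cA$ is invertible and upper triangular, $D_{E_\cA}^{-1}=D_{E_\cA^{-1}}$ is also norm-preserving by Theorem \ref{e4}, so one recovers a bound on $\norm{V_gf}_{L^{p,q}_m}$, hence on $\norm{f}_{M^{p,q}_m}$, from $\norm{W_\cA(f,g)}_{L^{p,q}_m}$. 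Combining the two directions yields the claimed equivalence of norms \eqref{charmod}.

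Finally, for the case $p=q$ I would note that $L^{p,p}_m(\rdd)$ is simply a weighted $L^p$ space, on which $D_S$ for \emph{any} $S\in GL(2d,\bR)$ acts as a norm-preserving isomorphism up to the Jacobian constant $|\det S|^{-1/p}$ (again from Theorem \ref{e4}, whose conclusion for $p=q$ needs no triangularity). Thus the change-of-variables step $w\mapsto E_\cA w$ in the convolution estimate goes through without the upper-triangular hypothesis, and the same two-sided argument gives the equivalence; this is exactly the mechanism behind the final sentence of the statement.

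The main obstacle I anticipate is the convolution/Young-inequality step in the quasi-Banach range $0<p,q<1$ excluded here (the theorem assumes $1\le p,q\le\infty$), but even within the stated Banach range the delicate point is verifying that the kernel $W_\cA(\gamma,g)$ lies in the weighted amalgam space needed for the mixed-norm Young inequality to hold with the weight $m$; this requires the submultiplicativity/moderateness of $m$ together with Schwartz decay of $W_\cA(\gamma,g)$, and care in distributing the weight across the convolution so that the hypothesis $m\asymp m\circ E_\cA^{-1}$ is used precisely where the change of variables shifts the weight.
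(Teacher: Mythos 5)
Your proposal matches the paper's own route: the reproducing formula \eqref{e6} combined with the covariance $|W_\cA(\pi(w)\gamma,g)(z)|=|W_\cA(\gamma,g)(z-E_\cA w)|$ turns the estimate into a convolution against a Schwartz kernel, and Theorem \ref{e4} (together with Proposition \ref{thmA12}(ii) and the hypothesis $m\asymp m\circ E_\cA^{-1}$) absorbs the dilation by the upper-triangular $E_\cA$, the $p=q$ case requiring no triangularity. The only thin spot is the converse inequality, which is completed exactly along the lines you indicate --- either via the Moyal-type dual of \eqref{e6}, which bounds $|V_\gamma f|$ by a convolution of $|W_\cA(f,g)|$ evaluated at $E_\cA w$, or more directly from the factorization \eqref{WA-STFT}, which reduces the whole statement to a change of window plus Theorem \ref{e4}.
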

For the quasi-Banach modulation spaces, the characterization above still holds true, but the proof is obtained by different methods. 
The main issue is to understand  which symplectic matrices give a sub-matrix $E_\cA$ invertible, so that the corresponding distributions $W_\cA$ can characterize modulation spaces.

First, let us introduce the chirp function related to the symmetric matrix $C\in\bR^{2d\times 2d}$, defined as
$$\Phi_C(t)=e^{\pi i t\cdot Ct},\quad t\in\rd$$
and, for $E\in GL(d,\bR)$, the dilation operator $\mathfrak{T}_E$ (which is a metaplectic operator, see below)
 $$\mathfrak{T}_E:=|\det(E)|^{1/2}\,f(E\,\cdot).$$
 The answer to our problem is contained in the recent contribution \cite{CGframes} and can be summarized as follows:
\begin{theorem}\label{GiuLaMaschera}
	Let $W_\cA$ be a shift-invertible metaplectic Wigner distribution with $\cA\in Sp(2d,\bR)$. Then the symplectic matrix $\cA$ can be split into the product of four symplectic matrices:
	\[
	\cA=\cD_{E_\cA^{-1}}V_{M_\cA+L}V_L^T \Lift(G_\cA),
	\]
see Section $3$ for their definition. The corresponding $W_\cA$ becomes
$$	W_\cA(f,g)=\mathfrak{T}_{E_\cA^{-1}}\Phi_{M_\cA+L}V_{\widehat{\delta_\cA} g}f,\quad f,g\in L^2(\rd).$$
$\widehat{\delta_\cA} \in Mp(d,\bR)$ is called \emph{deformation operator} and can be explicitly computed, see Section $3$ for details.
\end{theorem}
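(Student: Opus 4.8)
The plan is to establish the factorization first at the level of the symplectic group and then to transfer it to metaplectic operators through the covering homomorphism $\pi^{Mp}$. For the matrix identity $\cA=\cD_{E_\cA^{-1}}V_{M_\cA+L}V_L^T\Lift(G_\cA)$ I would proceed constructively, peeling off the three outer factors from the left and showing that the symplectic relations force the residual to be a lift of an element of $Sp(d,\bR)$.

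I would start with the dilation. I would first check that, by the block forms of Section 3, the shear matrices $V_{M_\cA+L}$, $V_L^T$ and the lift $\Lift(G_\cA)$ all leave the distinguished $2d\times2d$ sub-block (the one that reads off $A_{11},A_{13},A_{21},A_{23}$) equal to the identity; hence this sub-block of the full product equals that of $\cD_{E_\cA^{-1}}$. This pins down the dilation factor and forces the sub-block to be $E_\cA$, which must therefore be invertible --- exactly the shift-invertibility hypothesis. Left-multiplying $\cA$ by $\cD_{E_\cA^{-1}}^{-1}=\cD_{E_\cA}$ then normalizes that corner to the identity.

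Next I would peel off the two shears. With the corner normalized, the remaining off-diagonal blocks determine two matrices $L$ and $M_\cA+L$, and the symplectic constraint $\cA^T J\cA=J$ (for the $4d\times4d$ form $J$) is what guarantees that these matrices are symmetric, so that $V_{M_\cA+L}$ and $V_L^T$ are genuinely symplectic and can be removed. The step I expect to be the main obstacle is the last one: after removing the dilation and the two shears, I must verify that the residual symplectic matrix has precisely the block shape of $\Lift(G_\cA)$ --- that is, that all blocks coupling the two $\rd$-factors of the tensor picture vanish and that the surviving blocks assemble into a single $G_\cA\in Sp(d,\bR)$. This is a bookkeeping argument driven entirely by the remaining quadratic symplectic identities, and getting all the coupling blocks to cancel is the delicate part.

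Finally I would pass to operators. Since $\pi^{Mp}$ is a group homomorphism and a two-fold cover, the matrix factorization lifts to $\hat\cA=c\,\mathfrak{T}_{E_\cA^{-1}}\,\widehat{V_{M_\cA+L}}\,\widehat{V_L^T}\,\widehat{\Lift(G_\cA)}$ with $|c|=1$, using the identifications of Section 3 ($\cD\leftrightarrow\mathfrak{T}$ and the shears $\leftrightarrow$ chirps). Evaluating on $f\otimes\bar g$, I expect $\widehat{V_L^T}\,\widehat{\Lift(G_\cA)}$ to collapse the tensor into an STFT $V_{\widehat{\delta_\cA}g}f$ with a deformed window $\widehat{\delta_\cA}\in Mp(d,\bR)$ read off from $G_\cA$, the position shear to contribute the chirp $\Phi_{M_\cA+L}$, and the dilation to give $\mathfrak{T}_{E_\cA^{-1}}$, yielding $W_\cA(f,g)=\mathfrak{T}_{E_\cA^{-1}}\Phi_{M_\cA+L}V_{\widehat{\delta_\cA}g}f$. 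The residual phase $c$ would then be fixed to $1$ by testing the identity on a Gaussian, or equivalently by invoking the reproducing formula of Lemma \ref{intertFormula}.
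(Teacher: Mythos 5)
Your overall plan --- factor $\cA$ inside $Sp(2d,\bR)$ by peeling off a dilation, two shears and a lift, then push the factorization through $\pi^{Mp}$ and evaluate on $f\otimes\bar g$ --- is the same route the paper follows (it defers the computations to \cite[Theorem 4.2 and Corollary 4.3]{CGframes}, which rest on the block identities \eqref{relInTermsOfSubm} and Lemma \ref{lemma44}). However, your first step rests on a false premise. The distinguished sub-block $\cB\mapsto E_\cB$ reads off the entries $B_{11},B_{13},B_{21},B_{23}$, and one checks directly from \eqref{defDLVC} and \eqref{liftmatrix} that $E_{V_C}=E_{\Lift(S)}=\left(\begin{smallmatrix} I_{d\times d} & 0 \\ 0 & 0\end{smallmatrix}\right)$, which is singular, not the identity (only $V_L^T$ has $E$-block equal to $I_{2d\times 2d}$); moreover $E_{\cD_{E^{-1}}}=\left(\begin{smallmatrix} E_{11} & 0 \\ E_{21} & 0\end{smallmatrix}\right)\neq E$, and the map $\cB\mapsto E_\cB$ is not multiplicative. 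So the argument ``the other factors have identity sub-block, hence the sub-block of the product is that of the dilation, which is $E_\cA$'' fails at every link. The identification $E_{\cD_{E^{-1}}V_CV_L^T\Lift(S)}=E$ is true, but it has to be obtained by multiplying the four matrices out, as in \eqref{eq456}; it cannot be read off factorwise.

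Second, the step you label ``delicate bookkeeping'' is the entire mathematical content of the theorem: after removing the dilation and the shears one must show that the residue has exactly the form \eqref{liftmatrix} with $G_\cA=LE_\cA^{-1}\cE_\cA$, and that this $G_\cA$ is \emph{symplectic} in $Sp(d,\bR)$. This is precisely where the relations $\cF_\cA=E_\cA^{-T}F_\cA^T\cE_\cA$, the invertibility of $\cE_\cA$, and $M_\cA=E_\cA^TF_\cA-\left(\begin{smallmatrix}0 & I\\ 0& 0\end{smallmatrix}\right)$ enter, and your proposal gives no indication of how these are derived or used, so the core of the proof is missing rather than sketched. Finally, the operator identity can only hold \emph{up to a sign}: $\pi^{Mp}$ is a two-fold cover and the right-hand side of \eqref{WA-STFT} is assembled from specific metaplectic representatives, which is why Theorem \ref{GiuLaMaschera0} states the formula ``up to a sign''. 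The reproducing formula of Lemma \ref{intertFormula} is linear in $W_\cA$ and cannot fix that phase; a Gaussian test fixes it only for one choice of $\hat\cA$ in the fiber over $\cA$, so your claim that $c$ is ``fixed to $1$'' should be weakened accordingly.
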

Roughly speaking, the result above says that 
\vspace{0.5truecm}

\centerline{\emph{\bf $W_\cA$ is shift-invertible if and only if $W_\cA$ is a STFT}}
\centerline{ \emph{\bf  up to linear change of variables and products-by-chirps}.}
\vspace{0.5truecm}
The invariance of $L^{p,q}(\rdd)$ spaces under dilations by upper-triangular matrices stated in Theorem \ref{e4} suggests that any dilation of this type, applied to the STFT, give equivalent modulation norms. 

The characterization of Theorem \ref{GiuLaMaschera} clearly depends on the blocks of $\cA$, does not clarify whether  $E_\cA$, $M_\cA$ and $G_\cA$ can be chosen independently of each other. The main result of this work expresses their independence:

\begin{theorem}\label{thmG}
For every $E\in GL(2d,\bR)$, $C\in Sym(2d,\bR)$ and $\delta\in Mp(d,\bR)$, the metaplectic Wigner distribution:
\begin{equation}\label{exprWAnew}
	W_\cA(f,g)(z)=|\det(E)|^{-1}\Phi_{C}(E^{-1}z) V_{\hat\delta g}f(E^{-1}z), \qquad f,g\in L^2(\rd), \ z\in\rdd
\end{equation}
is shift-invertible. Conversely, for every shift-invertible $W_\cA$ there exist $E\in GL(2d,\bR)$, $C\in\bR^{2d\times2d}$ symmetric and $\hat\delta\in Mp(d,\bR)$ such that (\ref{exprWAnew}) holds.
\end{theorem}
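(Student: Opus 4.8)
The plan is to treat the two implications separately. The converse --- that every shift-invertible $W_\cA$ admits the representation \eqref{exprWAnew} --- is essentially a repackaging of Theorem \ref{GiuLaMaschera}, while the direct implication carries the genuine ``independence'' content and is where I would concentrate the effort. For the direct statement, I would read the right-hand side of \eqref{exprWAnew} as a scalar multiple of $\hat\cA(f\otimes\bar g)$, where
\[
\hat\cA:=\mathfrak{T}_{E^{-1}}\circ\hat{\cM}_C\circ\hat\cB,
\]
$\hat\cB$ sends $f\otimes\bar g$ to $V_{\hat\delta g}f$, $\hat{\cM}_C$ is multiplication by the chirp $\Phi_C$, and $\mathfrak{T}_{E^{-1}}$ is the dilation by $E^{-1}$. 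Each factor is metaplectic: the dilation on $\lrdd$ corresponds to the symplectic matrix $\mathrm{diag}(E,(E^T)^{-1})\in Sp(2d,\bR)$, which is available for \emph{every} $E\in GL(2d,\bR)$; multiplication by $\Phi_C$ is metaplectic precisely because $C$ is symmetric; and $\hat\cB$ is metaplectic since the plain STFT $V_gf=\hat\cA_{st}(f\otimes\bar g)$ is a metaplectic Wigner distribution and the window change is absorbed by $f\otimes\overline{\hat\delta g}=(I\otimes\hat\delta^\flat)(f\otimes\bar g)$, with $\hat\delta^\flat\colon h\mapsto\overline{\hat\delta\bar h}$ metaplectic and hence $I\otimes\hat\delta^\flat\in Mp(2d,\bR)$. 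Thus $\hat\cA\in Mp(2d,\bR)$ and $W_\cA$ is a bona fide metaplectic Wigner distribution, the scalar normalization in \eqref{exprWAnew} being recovered by a direct computation.

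It then remains to identify the block $E_\cA$ and to check that it is invertible. Rather than multiply out the $4d\times4d$ symplectic matrices, I would use the covariance identity $|W_\cA(\pi(w)f,g)(z)|=|W_\cA(f,g)(z-E_\cA w)|$. Since $|\Phi_C|\equiv1$ and the STFT satisfies $|V_{\hat\delta g}(\pi(w)f)(y)|=|V_{\hat\delta g}f(y-w)|$, a one-line computation gives
\[
|W_\cA(\pi(w)f,g)(z)|=|\det E|^{-1}\,|V_{\hat\delta g}f(E^{-1}(z-Ew))|=|W_\cA(f,g)(z-Ew)|.
\]
Choosing $f,g$ with $W_\cA(f,g)\not\equiv0$ and non-periodic modulus (e.g. Gaussians) and comparing the two expressions, the translation vectors must coincide for every $w$, forcing $E_\cA=E\in GL(2d,\bR)$; hence $W_\cA$ is shift-invertible. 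Note that neither $C$ nor $\delta$ enters $E_\cA$, which is exactly the independence asserted in the theorem.

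For the converse, I would apply Theorem \ref{GiuLaMaschera} to a given shift-invertible $W_\cA$, obtaining $\cA=\cD_{E_\cA^{-1}}V_{M_\cA+L}V_L^T\Lift(G_\cA)$ together with the factorization $W_\cA(f,g)=\mathfrak{T}_{E_\cA^{-1}}\Phi_{M_\cA+L}V_{\widehat{\delta_\cA}g}f$. Setting $E:=E_\cA\in GL(2d,\bR)$, $C:=M_\cA+L$ and $\hat\delta:=\widehat{\delta_\cA}$, the only thing to verify is that $C$ is symmetric; but this is forced by $V_{M_\cA+L}\in Sp(2d,\bR)$, since a block matrix $\big(\begin{smallmatrix}I&0\\C&I\end{smallmatrix}\big)$ is symplectic iff $C=C^T$. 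Expanding the dilation $\mathfrak{T}_{E^{-1}}$ then rewrites the factorization as \eqref{exprWAnew}.

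The only delicate points I foresee are (a) the metaplecticity of $\hat\cB$, i.e. that replacing the STFT window by $\hat\delta g$ preserves the metaplectic structure, and (b) the bookkeeping of the scalar constant; both are routine given the available machinery. The conceptual crux --- and the reason the three parameters are genuinely independent --- is the computation in the second paragraph: the chirp is unimodular and the STFT is covariant with $E_\cA=I$, so the dilation block $E$ is the sole contributor to $E_\cA$ and is unconstrained by the choice of $C$ and $\delta$.
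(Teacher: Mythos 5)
Your proposal is correct, and its converse direction coincides with the paper's: both simply specialize Theorem \ref{GiuLaMaschera0} by taking $E=E_\cA$, $C=M_\cA+L$ (symmetric since $M_\cA$ and $L$ both are) and $\hat\delta=\widehat{\delta_\cA}$. For the direct implication you take a genuinely different route. The paper works entirely at the level of $4d\times 4d$ symplectic matrices: it reads the operator behind \eqref{exprWAnew} as $\mathfrak{T}_{E^{-1}}\phi_C\hat A_{ST}\widehat{\Lift(\bar\delta)}$, uses the identities $A_{ST}=V_{-L}V_L^T\cA_{FT2}$ and $\cA_{FT2}=\Lift(J)$ to reduce the product to the canonical form $\cD_{E^{-1}}V_{C-L}V_L^T\Lift(J\bar\delta)$, and then reads off $E_\cA=E$ from the block computation \eqref{eq456} (equivalently, from the bijection $\alpha$). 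You establish metaplecticity of the composed operator in essentially the same way, but then identify $E_\cA$ dynamically through the covariance identity of Lemma \ref{commpiWA}: since $|\Phi_C|\equiv 1$ and $|V_{\hat\delta g}\pi(w)f|$ is the translate of $|V_{\hat\delta g}f|$ by $w$, the modulus of $W_\cA(f,g)$ gets translated by $Ew$, and comparison with $|W_\cA(f,g)(\cdot-E_\cA w)|$ forces $E_\cA=E$. This avoids all the explicit matrix algebra and makes the independence of $E_\cA$ from $C$ and $\delta$ conceptually transparent, at the price of invoking Lemma \ref{commpiWA} and a periodicity argument. Two minor points to tidy up: (a) in that last step you do not need to choose $f,g$ with ``non-periodic modulus'' --- any nonzero pair works, because $\|W_\cA(f,g)\|_2=\|f\|_2\|g\|_2\neq 0$ by \eqref{Moyal} and a nonzero $L^2$ function admits no nonzero period; (b) with the paper's normalization $\mathfrak{T}_{E^{-1}}F=|\det E|^{-1/2}F(E^{-1}\cdot)$, the composed metaplectic operator yields the constant $|\det E|^{-1/2}$ rather than the $|\det E|^{-1}$ appearing in \eqref{exprWAnew}; since metaplectic operators are unitary and hence determined up to a unimodular factor, this is not absorbed by ``direct computation'' and should be flagged as a normalization discrepancy (it is already present in the paper between Theorem \ref{GiuLaMaschera0} and the corollary), though it has no bearing on shift-invertibility.
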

This issue is proven in Section $3$.

\section{Preliminaries}\label{sec:preliminaries}
\textbf{Notation.} We denote by %$t^2=t\cdot t$,  $t\in\rd$, and
$xy=x\cdot y$ the scalar product on $\Ren$.  The space   $\sch(\Ren)$ is the Schwartz class whereas its dual $\sch'(\Ren)$ is the space of temperate distributions. The brackets  $\la f,g\ra$ are the extension to $\sch' (\Ren)\times\sch (\Ren)$ of the inner product $\la f,g\ra=\int f(t){\overline {g(t)}}dt$ on $L^2(\Ren)$ (conjugate-linear in the second component). A point in the phase space (or \tf\ space) is written as
$z=(x,\xi)\in\rdd$, and  the corresponding phase-space shift (\tfs )
acts   as
$
\pi (z)f(t) = e^{2\pi i \xi t} f(t-x), \, \quad t\in\rd.
$

The notation $f\lesssim g$ means that there exists $C>0$ such that $ f(x)\leq Cg(x)$  for every $x$. The symbol $\lesssim_t$ is used to stress that $C=C(t)$. If $ g\lesssim f\lesssim g$ (equivalently, $ f \lesssim g\lesssim f$), we write $f\asymp g$. Given two measurable functions $f,g:\rd\to\bC$, we set $f\otimes g(x,y):=f(x)g(y)$. If $X(\rd)$ is any among $L^2(\rd),\cS(\rd),\cS'(\rd)$, $X\otimes X$ is the unique completion of $\text{span}\{x\otimes y : x\in X(\rd)\}$ with respect to the (usual) topology of $X(\rdd)$. Thus, the operator $f\otimes g\in\cS'(\rdd)$  characterized by its action on $\varphi\otimes\psi\in\cS(\rdd)$
\[
	\la f\otimes g,\varphi\otimes\psi\ra = \la f,\varphi\ra\la g,\psi\ra,\quad  \forall f,g\in\cS'(\rd),
\]
extends uniquely to a tempered distribution of $\cS'(\rdd)$. The subspace $\text{span}\{f\otimes g: f,g\in\cS'(\rd)\}$ is dense in $\cS'(\rdd)$.

$GL(d,\bR)$ stands for the group of $d\times d$ invertible matrices, whereas $Sym(d,\bR)=\{C\in\bR^{d\times d} \ : \ C \ is \ symmetric\}$.

%A set $\Lambda\subset \rd$ is called \emph{lattice} if there is a matrix $A\in GL(d,\bR)$  such that $\Lambda=A\zd =\{Az : z\in\zd\}$. 

\subsection{Weighted mixed norm spaces}\label{subsec:Lpq}
From now on  $v$ is a continuous, positive, even, submultiplicative weight function on $\rdd$, that is, 
$ v(z_1+z_2)\leq v(z_1)v(z_2)$, for every $ z_1,z_2\in\rdd$. A weight $m$ on $\rdd$ is \emph{$v$-moderate} if $m(z_1+z_2)\lesssim v(z_1)m(z_2)$  for all $z_1,z_2\in\rdd$.

%Observe that since $v$ is even, positive and submultiplicative, it follows that $v(z)\geq1$ for all $z \in\rdd$.  
 We write $m\in \mathcal{M}_v(\rdd)$ if $m$ is a positive, continuous, even and $v$-moderate weight function on $\rdd$. Important examples are the polynomial weights
\begin{equation}\label{vs}
	v_s(z) =(1+|z|)^{s},\quad s\in\bR,\quad z\in\rdd.
\end{equation}
Two weights $m_1,m_2$ are equivalent if $m_1\asymp m_2$. For instance, $v_s(z)\asymp (1+|z|^2)^{s/2}$.\\

If $m\in\cM_v(\rdd)$, $0<p,q\leq\infty$ and $f:\rdd\to\bC$ measurable, we set 
\[
	\norm{f}_{L^{p,q}_m}:=\left(\int_{\rd}\left(\int_{\rd}|f(x,y)|^pm(x,y)^p dx\right)^{q/p}dy\right)^{1/q},
\]
with the obvious adjustments when $\max\{p,q\}=\infty$. The space of measurable functions $f$ having $\norm{f}_{L^{p,q}_m}<\infty$ is denoted by $L^{p,q}_m(\rdd)$. %If $m\in\cM_v(\rdd)$ and $1\leq p,q\leq\infty$, then $L^{p,q}_{m}(\rdd)\ast L^1_v(\rdd)\hookrightarrow L^{p,q}_m(\rdd)$.\\
Recall the following partial generalization of the results in \cite{Fuhr}, contained in  \cite[Proposition 2.1]{CGframes}:

\begin{proposition}\label{thmA12}
		(i) Consider $A,D\in GL(d,\bR)$, $B\in \bR^{d\times d}$ and $0<p,q\leq\infty$. Define the {\bf upper triangular} matrix
		\begin{equation}\label{uppertr}
			S=\begin{pmatrix}
				A & B\\
				0_{d\times d} & D
			\end{pmatrix}.
		\end{equation}
		The mapping $\mathfrak{T}_S:f\in L^{p,q}(\rdd)\mapsto |\det(S)|^{1/2}f\circ S$  is an isomorphism of $L^{p,q}(\rdd)$ with bounded inverse $\mathfrak{T}_{S^{-1}}$.\\
		(ii) Consider $m\in\mathcal{M}_v(\rdd)$, $S\in GL(2d,\bR)$ and $0<p,q\leq\infty$, and the  dilation operator $(\mathfrak{T}_S)_m: f\in L^{p,q}_m(\rdd)\mapsto |\det(S)|^{1/2}f\circ S.$ If $m\circ S\asymp m$, then
	$\mathfrak{T}_S:L^{p,q}(\rdd)\to L^{p,q}(\rdd)$ is bounded if and only if $(\mathfrak{T}_S)_m:L^{p,q}_m(\rdd)\to L^{p,q}_m(\rdd)$ is bounded.
	\end{proposition}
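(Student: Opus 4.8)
The plan is to prove the two parts by different but elementary mechanisms: part (i) by an explicit two-step change of variables that exploits the upper-triangular block structure, and part (ii) by a conjugation (transfer) argument through the weight, each valid uniformly on the whole range $0<p,q\le\infty$, including the quasi-Banach region, since neither argument invokes the triangle inequality.

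For part (i), I would first write out the action of the dilation in coordinates: with $z=(x,y)\in\rd\times\rd$ and the upper-triangular $S$ of \eqref{uppertr}, one has $Sz=(Ax+By,\,Dy)$, so $\mathfrak{T}_S f(x,y)=|\det S|^{1/2}f(Ax+By,\,Dy)$. Then I would compute $\norm{\mathfrak{T}_S f}_{L^{p,q}(\rdd)}$ directly. In the inner integral (over $x$, exponent $p$), for each fixed $y$ I substitute $u=Ax+By$; this is a translation followed by the invertible map $A$, contributing the constant Jacobian factor $|\det A|^{-1}$ and, crucially, leaving the inner integral depending on $y$ only through the second slot $Dy$. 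In the outer integral (over $y$, exponent $q$) I then substitute $v=Dy$, contributing $|\det D|^{-1}$. The decisive structural point is that, because $S$ is upper triangular, the second component $Dy$ carries no $x$-dependence, so the two substitutions decouple and the iterated norm factorizes cleanly; a nonzero lower-left block would entangle $x$ and $y$ in the outer variable and destroy this factorization (consistent with the failure for $J$). The bookkeeping yields
\[
\norm{\mathfrak{T}_S f}_{L^{p,q}(\rdd)}=|\det A|^{1/2-1/p}\,|\det D|^{1/2-1/q}\,\norm{f}_{L^{p,q}(\rdd)},
\]
interpreting $1/p=0$ (resp.\ $1/q=0$) when $p=\infty$ (resp.\ $q=\infty$), in which case the corresponding integral is replaced by an essential supremum that is invariant under the substitution. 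Since the constant $C_S=|\det A|^{1/2-1/p}|\det D|^{1/2-1/q}$ is strictly positive, $\mathfrak{T}_S$ is bounded above and below; as $S^{-1}$ is again upper triangular with invertible diagonal blocks $A^{-1},D^{-1}$, the same computation gives boundedness of $\mathfrak{T}_{S^{-1}}$, and $\mathfrak{T}_S\mathfrak{T}_{S^{-1}}=\mathfrak{T}_{S^{-1}}\mathfrak{T}_S=\Id$, so $\mathfrak{T}_S$ is an isomorphism with inverse $\mathfrak{T}_{S^{-1}}$.

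For part (ii), I would introduce the multiplication operator $M_m:f\mapsto mf$, which by the very definition of the weighted norm is an isometric isomorphism $L^{p,q}_m(\rdd)\to L^{p,q}(\rdd)$ with inverse $M_{1/m}$. Conjugating the weighted dilation by $M_m$ and computing gives
\[
M_m\,(\mathfrak{T}_S)_m\,M_{1/m}=\frac{m}{m\circ S}\,\mathfrak{T}_S,
\]
that is, the conjugated operator is the unweighted dilation $\mathfrak{T}_S$ composed with multiplication by $m/(m\circ S)$. Here the hypothesis $m\circ S\asymp m$ is used exactly once and decisively: it forces the multiplier $m/(m\circ S)$ to be bounded above and below by positive constants, so multiplication by it is an isomorphism of $L^{p,q}(\rdd)$ (domination above and below of $|g|$ transfers to the (quasi-)norm by monotonicity). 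Consequently $(\mathfrak{T}_S)_m$ is bounded on $L^{p,q}_m(\rdd)$ if and only if $M_m(\mathfrak{T}_S)_m M_{1/m}$ is bounded on $L^{p,q}(\rdd)$ (because the flanking maps are isometric isomorphisms, preserving operator norms), which holds if and only if $\mathfrak{T}_S$ is bounded on $L^{p,q}(\rdd)$ (because the two differ by an isomorphism). Note that this direction uses no structural hypothesis on $S$ beyond invertibility.

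The hard part will be the mixed-norm factorization in part (i): one must verify carefully that the inner substitution $u=Ax+By$, which depends on the outer variable $y$ through the shift $By$, leaves the value of the inner integral independent of $y$. This is true because translation is measure preserving and the Jacobian $|\det A|^{-1}$ is a constant, so that the outer substitution $v=Dy$ may then be performed; this is precisely the step where upper-triangularity is indispensable. Part (ii), by contrast, is a soft transfer principle whose only subtlety is the correct bookkeeping in the conjugation identity, the weight condition $m\circ S\asymp m$ being tailored to render the error multiplier $m/(m\circ S)$ harmless.
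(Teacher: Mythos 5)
Your proof is correct, and it follows what is essentially the standard argument: the two-step change of variables in the mixed norm — valid precisely because upper-triangularity makes the inner substitution $u=Ax+By$ (a translation plus the constant-Jacobian map $A$) leave the inner integral depending on $y$ only through $Dy$, so the outer substitution $v=Dy$ decouples, yielding the isometry constant $|\det A|^{1/2-1/p}|\det D|^{1/2-1/q}$ without any use of the triangle inequality — together with the conjugation $M_m(\mathfrak{T}_S)_m M_{1/m}=\frac{m}{m\circ S}\,\mathfrak{T}_S$, where $m\circ S\asymp m$ makes the multiplier bi-bounded and hence harmless. Note that the paper itself does not prove this proposition but recalls it from \cite[Proposition 2.1]{CGframes} (building on \cite{Fuhr}), and your self-contained argument matches the proof given in those sources in both mechanism and bookkeeping.
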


\subsection{Time-frequency analysis tools}\label{subsec:23}
In this work, the Fourier transform of $f\in \cS(\rd)$ is normalized as
\[
\cF f=\hat f(\xi)=\int_{\rd} f(x)e^{-2\pi i\xi x}dx, \qquad \xi\in\rd.
\]
If $f\in\cS'(\rd)$, the Fourier transform of $f$ is defined by duality as the tempered distribution characterized by
\[
\langle \hat f,\hat\varphi\rangle=\langle f,\varphi\rangle, \qquad \varphi\in\cS(\rd).
\]
The operator $\cF$  is a surjective automorphism of $\cS(\rd)$ and $\cS'(\rd)$, as well as a surjective isometry of $L^2(\rd)$.
If $f\in\cS'(\rdd)$, we set $\cF_2f$, the partial Fourier transform with respect to the second variables: $$\cF_2(f\otimes g)=f\otimes\hat g,\quad f,g\in\cS'(\rd).$$
The \textit{short-time Fourier transform} of $f\in L^2(\rd)$ with respect to the window $g\in L^2(\rd)$ is defined in \eqref{STFTp}.

In information processing $\tau$-Wigner distributions  ($\tau\in\bR$) play a crucial role \cite{ZJQ21}. They are defined in \eqref{tauWigner}.
We recall the special cases $\tau=0$ and $\tau=1$, which  are the so-called (cross-)\textit{Rihacek distribution}
\begin{equation}\label{RD}
W_0(f,g)(x,\xi)=f(x)\overline{\hat g(\xi)}e^{-2\pi i\xi x}, \quad \phas\in\rd,
\end{equation}
 and (cross-)\textit{conjugate Rihacek distribution}
 \begin{equation}\label{CRD}
 W_1(f,g)(x,\xi)=\hat f(\xi)\overline{g(x)}e^{2\pi i\xi x}, \quad \phas\in\rd.
 \end{equation}
 %Observe that $W_\tau f(x,\xi)=\cF_2\mathfrak{T}_{L_\tau}(f\otimes\bar g)$, where for any $F$ on $\rdd$, $$\mathfrak{T}_{L_\tau}F(x,y)=F(x+\tau y,x-(1-\tau)y),\quad x,y\in\rd.$$

\subsection{Modulation  spaces \cite{KB2020,F1,Feichtinger_1981_Banach,book,Galperin2004,Kobayashi2006,PILIPOVIC2004194}} \label{subsec:MSs}
For $0<p,q\leq\infty$, $m\in\mathcal{M}_v(\rdd)$, and $g\in\cS(\rd)\setminus\{0\}$, the \textit{modulation space} $M^{p,q}_m(\rd)$ is defined as the space of tempered distributions $f\in\cS'(\rd)$ such that $$\norm{f}_{M^{p,q}_m}:=\Vert V_gf\Vert_{L^{p,q}_m}<\infty.$$ If $\min\{p,q\}\geq1$, the quantity $\norm{\cdot}_{M^{p,q}_m}$ is a norm, otherwise a quasi-norm. Different windows give  equivalent (quasi-)norms. Modulation spaces are (quasi-)Banach spaces, enjoying the inclusion properties: \\
if $0<p_1\leq p_2\leq\infty$, $0<q_1\leq q_2\leq\infty$, and $m_1,m_2\in\mathcal{M}_{v}(\rdd)$ satisfy $m_2\lesssim m_1$: $$ \cS(\rd)\hookrightarrow M^{p_1,q_1}_{m_1}(\rd)\hookrightarrow M^{p_2,q_2}_{m_2}(\rd)\hookrightarrow\cS'(\rd).$$ In particular, $M^1_v(\rd)\hookrightarrow M^{p,q}_m(\rd)$ for $m\in\mathcal{M}_v(\rdd)$ and $\min\{p,q\}\geq1$. %We will also use the inclusion $M^1_{m\otimes 1}(\rdd)\hookrightarrow L^1_m(\rdd)$. 
%We denote with $\cM^{p,q}_m(\rd)$ the closure of $\cS(\rd)$ in $M^{p,q}_m(\rd)$, which coincides with the latter when $p,q<\infty$. 
If $1\leq p,q<\infty$, $(M^{p,q}_m(\rd))'=M^{p',q'}_{1/m}(\rd)$, where $p'$ and $q'$ denote the Lebesgue dual exponents of $p$ and $q$, respectively. If $m_1\asymp m_2$, then $M^{p,q}_{m_1}(\rd)=M^{p,q}_{m_2}(\rd)$ for all $p,q$.

\subsection{The symplectic group $Sp(d,\mathbb{R})$ and the metaplectic operators}\label{subsec:26}
	A matrix $A\in\bR^{2d\times 2d}$ is symplectic, write $A\in Sp(d,\bR)$, if 
	\begin{equation}\label{fundIdSymp}
	A^TJA=J,\end{equation} where $J$ is the standard symplectic matrix defined in \eqref{defJ}.

	For $E\in GL(d,\bR)$ and $C\in Sym(2d,\bR)$, define:
	\begin{equation}\label{defDLVC}
		\cD_E:=\begin{pmatrix}
			E^{-1} & 0_{d\times d}\\
			0_{d\times d} & E^T
		\end{pmatrix} \qquad \text{and} \qquad V_C:=\begin{pmatrix}
			I_{d\times d} & 0\\ C & I_{d\times d}
		\end{pmatrix}.
	\end{equation}
	The matrices $J$, $V_C$ ($C$ symmetric), and $\cD_E$ ($E$ invertible) generate the group $Sp(d,\bR)$.\\

Recall  the Schr\"odinger representation $\rho$  of the Heisenberg group: $$\rho(x,\xi;\tau)=e^{2\pi i\tau}e^{-\pi i\xi x}\pi(x,\xi),$$ for all $x,\xi\in\rd$, $\tau\in\bR$. We will use the  property:  for all $f,g\in L^2(\rd)$, $z=(z_1,z_2),w=(w_1,w_2)\in\rdd$,
\[
	\rho(z;\tau)f\otimes\rho(w;\tau)g=e^{2\pi i\tau}\rho(z_1,w_1,z_2,w_2;\tau)(f\otimes g).
\]
For every $A\in Sp(d,\bR)$, $\rho_A(x,\xi;\tau):=\rho(A (x,\xi);\tau)$ defines another representation of the Heisenberg group that is equivalent to $\rho$, i.e., there exists a unitary operator $\hat A:L^2(\rd)\to L^2(\rd)$ such that:
\begin{equation}\label{muAdef}
	\hat A\rho(x,\xi;\tau)\hat A^{-1}=\rho(A(x,\xi);\tau), \qquad  x,\xi\in\rd, \ \tau\in\bR.
\end{equation}
This operator is not unique: if $\hat A'$ is another unitary operator satisfying (\ref{muAdef}), then $\hat A'=c\hat A$, for some constant $c\in\bC$, $|c|=1$. The set $\{\hat A : A\in Sp(d,\bR)\}$ is a group under composition and it admits the metaplectic group, denoted by $Mp(d,\bR)$, as subgroup. It is a realization of the two-fold cover of $Sp(d,\bR)$ and the projection:
 \begin{equation}\label{piMp}
	\pi^{Mp}:Mp(d,\bR)\to Sp(d,\bR)
\end{equation} is a group homomorphism with kernel $\ker(\pi^{Mp})=\{-id_{{L^2}},id_{{L^2}}\}$.

Throughout this paper, if $\hat A\in Mp(d,\bR)$, the matrix $A$ will always be the unique symplectic matrix such that $\pi^{Mp}(\hat A)=A$.

Recall the following basic facts on metaplectic operators.
\begin{proposition}{\cite[Proposition 4.27]{folland89}}\label{Folland427}
	Every operator $\hat A\in Mp(d,\bR)$ maps $\cS(\rd)$ isomorphically to $\cS(\rd)$ and it extends to an isomorphism on $\cS'(\rd)$.
\end{proposition}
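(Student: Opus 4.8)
The plan is to reduce the statement to the three types of generators of the metaplectic group. As recalled after \eqref{defDLVC}, the matrices $J$, $V_C$ (with $C\in Sym(d,\bR)$) and $\cD_E$ (with $E\in GL(d,\bR)$) generate $Sp(d,\bR)$. Writing $A=\pi^{Mp}(\hat A)$ as a finite product $A=A_1\cdots A_N$ of such generators and lifting through the homomorphism $\pi^{Mp}$, whose kernel is $\{-id_{L^2},id_{L^2}\}$, we obtain $\hat A=\pm\,\hat A_1\cdots\hat A_N$, where each $\hat A_j$ is a fixed metaplectic lift of one of the generators. Since the isomorphisms of $\cS(\rd)$ (resp.\ of $\cS'(\rd)$) form a group under composition and the central factor $\pm 1$ is immaterial for the isomorphism property, it suffices to prove the claim for the lifts of $J$, $V_C$ and $\cD_E$ separately.

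First I would identify these lifts explicitly from the intertwining relation \eqref{muAdef}. A short computation of $\hat A\,\rho(z;\tau)\,\hat A^{-1}$ shows that: the lift of $J$ is a unimodular multiple of the Fourier transform $\cF$; the lift of $\cD_E$ is the dilation $\mathfrak{T}_Ef=|\det E|^{1/2}f(E\,\cdot)$; and the lift of $V_C$ is multiplication by the chirp $\Phi_C$, $\Phi_C(t)=e^{\pi i t\cdot Ct}$. Each of these is a classical automorphism of $\cS(\rd)$: $\cF$ is an isomorphism with inverse $\cF^{-1}$; $\mathfrak{T}_E$ is an isomorphism with inverse $\mathfrak{T}_{E^{-1}}$, since a linear change of variables maps Schwartz functions continuously to Schwartz functions in both directions; and $f\mapsto\Phi_C f$ is an isomorphism with inverse $f\mapsto\overline{\Phi_C}f$, because $\Phi_C$ is smooth, of modulus one, and has all partial derivatives of polynomial growth, so the Leibniz rule yields the required control of the Schwartz seminorms.

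It then remains to extend each generator to $\cS'(\rd)$. For $\cF$ this is exactly the duality $\langle\hat f,\hat\varphi\rangle=\langle f,\varphi\rangle$ recalled in Section \ref{subsec:23}; for $\mathfrak{T}_E$ one extends by the usual change-of-variables duality, with continuous inverse $\mathfrak{T}_{E^{-1}}$; and for the chirp one uses that multiplication by a smooth function all of whose derivatives have polynomial growth is continuous on $\cS'(\rd)$, the inverse being multiplication by $\overline{\Phi_C}$. Composing these extensions, and invoking again stability under composition and the irrelevance of the sign, we conclude that $\hat A$ extends to an isomorphism of $\cS'(\rd)$.

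The only point that genuinely requires care is the explicit identification of the three lifts, that is, checking that $\cF$, $\mathfrak{T}_E$ and multiplication by $\Phi_C$ indeed satisfy \eqref{muAdef} for $A=J,\cD_E,V_C$ respectively (here one must track the phase factor $e^{-\pi i\xi x}$ appearing in $\rho$, which is precisely what matches the off-diagonal block of $V_C$). Everything else is routine: the generators are among the best-understood operators of harmonic analysis, and the reduction via the generating set of $Sp(d,\bR)$ together with the homomorphism property of $\pi^{Mp}$ makes the passage from generators to an arbitrary $\hat A$ automatic.
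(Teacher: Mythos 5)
Your proof is correct, but note that the paper contains no proof of this statement to compare against: it is quoted directly from Folland \cite[Proposition 4.27]{folland89}. Your argument is in substance the classical one (and essentially the proof in Folland's book): factor $A=\pi^{Mp}(\hat A)$ into the generators $J$, $V_C$, $\cD_E$ of $Sp(d,\bR)$, lift through $\pi^{Mp}$ using $\ker(\pi^{Mp})=\{-id_{L^2},id_{L^2}\}$ to get $\hat A=\pm\,\hat A_1\cdots\hat A_N$, identify the lifts via \eqref{muAdef} as $\cF$, chirp multiplications $\Phi_C$, and dilations $\mathfrak{T}_E$, and check that each of these is an automorphism of $\cS(\rd)$ extending to $\cS'(\rd)$ (by Fourier duality, by the fact that $\Phi_C$ is smooth with all derivatives of polynomial growth, and by change of variables, respectively). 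You correctly isolate the one step needing care, namely verifying the intertwining relation \eqref{muAdef} for the three candidate lifts with the phase $e^{-\pi i\xi x}$ of $\rho$ tracked; this is exactly where the off-diagonal block of $V_C$ is matched. The only point worth adding is the routine remark that the extension to $\cS'(\rd)$ is unique and independent of the chosen factorization: each factor is weak-$\ast$ continuous and $\cS(\rd)$ is weak-$\ast$ dense in $\cS'(\rd)$, so the composite extension agrees with any other continuous extension of $\hat A$, and the global sign $\pm 1$ is immaterial for the isomorphism property, as you say.
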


For $C\in\R^{d\times d}$, define: 
\begin{equation}\label{chirp}
	\Phi_C(t)=e^{\pi i t Ct},\quad t\in\rd.
\end{equation}
If  $C\in Sym(2d,\bR)\cap GL(2d,\bR)$, then we can compute explicitly its Fourier transform, that is: 
\begin{equation}\label{ft-chirp}
\widehat{\Phi_C}=|\det(C)|\,\Phi_{-C^{-1}}.
\end{equation}
In what follows we list the most important examples of metaplectic operators.
\begin{example}\label{es22} Consider the symplectic matrices $J$, $\cD_L$ and $V_C$  defined  in (\ref{defJ}) and (\ref{defDLVC}), respectively. Then,
	\begin{enumerate}
		\item[\it (i)] $\pi^{Mp}(\cF)=J$;
		\item[\it (ii)] if $\mathfrak{T}_E:=|\det(E)|^{1/2}\,f(E\cdot)$, then $\pi^{Mp}(\mathfrak{T}_E)=\cD_E$;
		\item[\it (iii)] if $\phi_C f=\Phi_C f$ (multiplication by chirp), then $\pi^{Mp}(\phi_C)=V_C$;
		\item[\it (iv)] if $\psi_C =\cF \Phi_{-C} \cF^{-1}$ (Fourier multiplier), then $\pi^{Mp}(\psi_C)f=V_C^T$;
		\item[\it (v)] if $\cF_2 $ is the Fourier transform with respect to the second variables, then $\pi^{Mp}(\cF_2)=\cA_{FT2}$, where $\cA_{FT2}\in Sp(2d,\bR)$ is the $4d\times4d$ matrix with block decomposition
		\begin{equation}\label{AFT2}
		\cA_{FT2}:=\begin{pmatrix}
			I_{d\times d} & 0_{d\times d} & 0_{d\times d} & 0_{d\times d}\\
			0_{d\times d} & 0_{d\times d} & 0_{d\times d} & I_{d\times d} \\
			0_{d\times d} & 0_{d\times d} & I_{d\times d} & 0_{d\times d}\\
			0_{d\times d} & -I_{d\times d} & 0_{d\times d} & 0_{d\times d}
		\end{pmatrix}.
		\end{equation}
	\end{enumerate}

\end{example}

We will often use the following \textit{lifting-type} result, proved in \cite[Theorem B1]{CGshiftinvertible}:

If 	$	G_\cA\in Sp(d,\bR)$ has block decomposition 
\begin{equation}\label{Ga}
	G_\cA=\begin{pmatrix}
		G_{\cA_{11}} &   G_{\cA_{12}}\\
		G_{\cA_{21}} &  G_{\cA_{22}}
	\end{pmatrix}
\end{equation}
then it is easy to show that the $4d\times 4d$ matrix
\begin{equation}\label{liftmatrix}
	\Lift(G_\cA)=\begin{pmatrix}
		I_{d\times d} & 0_{d\times d} & 0_{d\times d} & 0_{d\times d}\\
		0_{d\times d} & G_{\cA_{11}} & 0_{d\times d} &  G_{\cA_{12}}\\
		0_{d\times d} & 0_{d\times d} & I_{d\times d} & 0_{d\times d}\\
		0_{d\times d} &  G_{\cA_{21}} & 0_{d\times d} &  G_{\cA_{22}}
	\end{pmatrix}
\end{equation}
is  symplectic and $\widehat{\Lift(G_\cA)}(f\otimes g)=f\otimes\widehat{G_\cA}g$ for every $f,g\in L^2(\rd)$.

\subsection{Metaplectic Wigner distributions}
Let $\hat\cA\in Mp(2d,\bR)$. The \textbf{metaplectic Wigner distribution} associated to $\hat\cA$ is defined in \eqref{WApre}.
The most famous  time-frequency representations  are metaplectic Wigner distributions. Namely, the STFT can be represented as  $$V_gf=\hat A_{ST}(f\otimes\bar g)$$  where:% if $I_{d\times d}$ denotes the identity of $\bR^{d\times d}$ and $0_{d\times d}$ denotes the matrix of $\bR^{d\times d}$ with all zero entries,
\begin{equation}\label{AST}
	A_{ST}=\begin{pmatrix}
		I_{d\times d} & -I_{d\times d} & 0_{d\times d} & 0_{d\times d}\\
		0_{d\times d} & 0_{d\times d} & I_{d\times d} & I_{d\times d}\\
		0_{d\times d} & 0_{d\times d} & 0_{d\times d} & -I_{d\times d}\\
		-I_{d\times d} & 0_{d\times d} & 0_{d\times d} &0_{d\times d}
	\end{pmatrix}
\end{equation}
and the $\tau$-Wigner distribution defined in \eqref{tauWigner} can be recast as $W_\tau(f,g)=\hat A_\tau(f\otimes\bar g)$, with
\begin{equation}\label{Atau}
	A_\tau=\begin{pmatrix}
		(1-\tau)I_{d\times d} & \tau I_{d\times d} & 0_{d\times d} & 0_{d\times d}\\
		0_{d\times d} & 0_{d\times d} & \tau I_{d\times d} & -(1-\tau)I_{d\times d}\\
		0_{d\times d} & 0_{d\times d} & I_{d\times d} & I_{d\times d}\\
		-I_{d\times d} & I_{d\times d} & 0_{d\times d} & 0_{d\times d}
	\end{pmatrix}.
\end{equation}
We recall the following continuity properties.
\begin{proposition}\label{prop25}
	Let $W_\cA$ be a metaplectic Wigner distribution. Then,\\
	 $W_\cA:L^2(\rd)\times L^2(\rd) \to L^2(\rdd) $ is bounded.
	 The same result holds if we replace  $L^2$ by $\cS$ or $\cS'$.
\end{proposition}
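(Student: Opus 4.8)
The plan is to exploit the defining factorization $W_\cA(f,g)=\hat\cA(f\otimes\bar g)$ from \eqref{WApre}, which exhibits $W_\cA$ as the composition of the conjugate tensor map $(f,g)\mapsto f\otimes\bar g$ with the metaplectic operator $\hat\cA\in Mp(2d,\bR)$. Since $\hat\cA$ has excellent mapping properties on each of $\lrdd$, $\cS(\rdd)$, and $\cS'(\rdd)$, the whole argument reduces to controlling the conjugate tensor map on $\lrd$, $\cS(\rd)$, and $\cS'(\rd)$ respectively, and then transporting the conclusion through $\hat\cA$.

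First, for the $L^2$ statement I would compute directly. By Fubini's theorem,
\[
\norm{f\otimes\bar g}_{\lrdd}^2=\int_{\rdd}|f(x)|^2|g(y)|^2\,dx\,dy=\norm{f}_{\lrd}^2\,\norm{g}_{\lrd}^2,
\]
so $(f,g)\mapsto f\otimes\bar g$ is a bounded bilinear map $\lrd\times\lrd\to\lrdd$ of norm one. As $\hat\cA$ is a unitary operator on $\lrdd$, it preserves the $L^2$ norm, whence
\[
\norm{W_\cA(f,g)}_{\lrdd}=\norm{\hat\cA(f\otimes\bar g)}_{\lrdd}=\norm{f\otimes\bar g}_{\lrdd}=\norm{f}_{\lrd}\,\norm{g}_{\lrd},
\]
which is exactly the asserted boundedness; in fact $W_\cA(f,g)$ has $L^2$ norm equal to the product $\norm{f}_{\lrd}\norm{g}_{\lrd}$.

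For the Schwartz case I would first note that conjugation $g\mapsto\bar g$ is a continuous linear isomorphism of $\cS(\rd)$, and that for $f,g\in\cS(\rd)$ the tensor product $f\otimes\bar g$ lies in $\cS(\rdd)$, with each Schwartz seminorm of $f\otimes\bar g$ dominated by a product of finitely many Schwartz seminorms of $f$ and of $g$; this makes $(f,g)\mapsto f\otimes\bar g$ continuous from $\cS(\rd)\times\cS(\rd)$ into $\cS(\rdd)$. By Proposition \ref{Folland427}, $\hat\cA$ maps $\cS(\rdd)$ isomorphically onto itself, so the composition $W_\cA=\hat\cA\circ(\,\cdot\,\otimes\,\overline{\cdot}\,)$ is continuous $\cS(\rd)\times\cS(\rd)\to\cS(\rdd)$. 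The $\cS'$ case proceeds along the same lines: conjugation is continuous on $\cS'(\rd)$, the tensor product of tempered distributions $(f,g)\mapsto f\otimes\bar g$ maps $\cS'(\rd)\times\cS'(\rd)$ into $\cS'(\rdd)$ separately continuously (this is precisely the construction recalled in Section \ref{sec:preliminaries}), and $\hat\cA$ extends to an isomorphism of $\cS'(\rdd)$ again by Proposition \ref{Folland427}; composing gives the claim.

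The only genuinely delicate point is the meaning of ``bounded'' on the non-normed spaces $\cS$ and $\cS'$, where it must be read as continuity of the bilinear map with respect to the natural Fr\'echet (resp.\ strong dual) topologies. On $\cS(\rd)$ this is routine once the seminorm estimates for $f\otimes\bar g$ are written out, and joint continuity then follows since $\cS(\rd)$ is a Fr\'echet space. On $\cS'(\rd)$ the subtle issue is that the tensor product of distributions is in general only separately continuous (hypocontinuous), not jointly continuous; I would therefore rely on the separate continuity of the tensor map set up in the preliminaries rather than claim a joint bilinear bound, and then transport it through the continuous isomorphism $\hat\cA$. This last transfer is immediate, because $\hat\cA$ acts as a fixed continuous linear operator and does not interact with the bilinear structure.
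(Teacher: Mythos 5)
Your proof is correct and takes essentially the same route the paper intends (the proposition is recalled from the literature, where it is proved exactly this way): factor $W_\cA=\hat\cA\circ\bigl((f,g)\mapsto f\otimes\bar g\bigr)$ via \eqref{WApre}, use unitarity of $\hat\cA$ on $\lrdd$ for the $L^2$ case, and invoke Proposition \ref{Folland427} together with the continuity of the tensor map for the $\cS$ and $\cS'$ cases. Your added precision that on $\cS'$ the tensor map is only separately continuous, so \emph{bounded} must be read in that sense, is accurate and consistent with the statement.
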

Since metaplectic operators are unitary, for all $f_1,f_2,g_1,g_2\in L^2(\rd)$,
\begin{equation}\label{Moyal}
	\la W_\cA(f_1,f_2),W_\cA(g_1,g_2)\ra = \la f_1,g_1\ra \overline{\la f_2,g_2\ra}.
\end{equation}
For the $4d\times4d$ symplectic matrix with block decomposition (\ref{blockA})
we define four $2d\times2d$ sub-matrices as follows:
\begin{equation}\label{defEAFA}
	E_\cA=\begin{pmatrix}
		A_{11} & A_{13}\\
		A_{21} & A_{23}
	\end{pmatrix}, \quad F_\cA=\begin{pmatrix}
		A_{31} & A_{33}\\
		A_{41} & A_{43}
	\end{pmatrix},
\end{equation}
and
\begin{equation}\label{defeafa}
		\cE_\cA=\begin{pmatrix}
		A_{12} & A_{14}\\
		A_{22} & A_{24}
	\end{pmatrix}, \quad \cF_\cA=\begin{pmatrix}
		A_{32} & A_{34}\\
		A_{42} & A_{44}
	\end{pmatrix}.
\end{equation}

\begin{remark}\label{remK}
	If $\cA\in Sp(2d,\bR)$, we can highlight the submatrices $E_\cA$, $\cE_\cA$, $F_\cA$ and $\cF_\cA$ as:
	\[
		\cA=\begin{pmatrix}
		E_\cA & \cE_\cA\\
		F_\cA & \cF_\cA
		\end{pmatrix}\mathcal{K},
	\] 
	where
	\[	
		\mathcal{K}=\begin{pmatrix} 
		I_{d\times d}  & 0_{d\times d} & 0_{d\times d} & 0_{d\times d}\\
		0_{d\times d} & 0_{d\times d} & I_{d\times d}  & 0_{d\times d}\\
		0_{d\times d} & I_{d\times d}  & 0_{d\times d} & 0_{d\times d}\\
		0_{d\times d} & 0_{d\times d} & 0_{d\times d} & I_{d\times d}
		\end{pmatrix}.
	\]
	We observe that $\cK$ is not symplectic. 
\end{remark}

Using the property of $\cA$ symplectic matrix, one can infer (see \cite[Section 2]{CGframes}) that:
\begin{equation}\label{relInTermsOfSubm}
	\begin{cases}
		E_\cA^TF_\cA-F_\cA^TE_\cA=J,\\
		\cE_\cA^T\cF_\cA-\cF_\cA^T\cE_\cA=J,\\
		E_\cA^T\cF_\cA-F_\cA^T\cE_\cA=0_{d\times d}.
	\end{cases}
\end{equation}

Finally, let us introduce the matrix:
\begin{equation}\label{defL}
	L=\begin{pmatrix}
		0_{d\times d} & I_{d\times d}\\
		I_{d\times d} & 0_{d\times d}
	\end{pmatrix} 
\end{equation}
The relationship among the matrices above are detailed below (see \cite[Lemma 2.6]{CGframes}).
\begin{lemma}\label{lemma44}
Consider the sub-matrices $E_\cA,F_\cA,\cE_\cA,\cF_\cA$  defined  in (\ref{defEAFA}) and (\ref{defeafa}). Let $L$ be defined as in (\ref{defL}).\\ 
	(i) If $E_\cA\in GL(2d,\bR)$, then\\
	(i.1) $\cF_\cA=E_\cA^{-T}F_\cA^T\cE_\cA$;\\
	(i.2) $G_\cA:=LE_\cA^{-1}\cE_\cA$ is symplectic;\\
	(i.3) $\cE_\cA\in GL(2d,\bR)$ and $\det(\cE_\cA)=(-1)^d\det(E_\cA)$.\\
	(ii) If we assume $\cE_\cA\in GL(2d,\bR)$, then,\\
	(ii.1) $F_\cA = \cE_\cA^{-T}\cF_\cA^T E_\cA$;\\
	(ii.2)  $\mathfrak{G}_\cA=L\cE_\cA^{-1}E_\cA$ is symplectic;\\
	(ii.3) $E_\cA\in GL(2d,\bR)$ and $\det(E_\cA)=(-1)^d\det(\cE_\cA)$.\\
	In particular, $E_\cA$ is invertible if and only if $\cE_\cA$ is invertible.
\end{lemma}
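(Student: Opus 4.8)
The whole argument runs off the three identities collected in \eqref{relInTermsOfSubm}, which encode the symplecticity of $\cA$; I will treat these as the given input, with $J$ denoting the $2d\times 2d$ standard symplectic matrix appearing there. Before starting, I would record three elementary facts about $L$ from \eqref{defL}: it satisfies $L^T=L=L^{-1}$, $\det L=(-1)^d$, and $LJL=-J$ (a one-line block computation, since $LJ=\mathrm{diag}(-I,I)$ and multiplying by $L$ again swaps the blocks back with the signs intact). I would also invoke the standard fact that every symplectic matrix has determinant $+1$. Finally, I would point out that the system \eqref{relInTermsOfSubm} is invariant under the exchange $(E_\cA,F_\cA)\leftrightarrow(\cE_\cA,\cF_\cA)$: the first and second identities simply swap, while the third is sent to its own transpose (hence still holds, being $=0_{d\times d}$). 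Consequently part (ii) follows from part (i) verbatim after this relabeling, so I would only spell out (i) in detail.

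For (i.1), the third identity in \eqref{relInTermsOfSubm} reads $E_\cA^T\cF_\cA=F_\cA^T\cE_\cA$; since $E_\cA$ is invertible, left-multiplication by $E_\cA^{-T}$ yields $\cF_\cA=E_\cA^{-T}F_\cA^T\cE_\cA$. For (i.2), I would substitute this expression for $\cF_\cA$ into the second identity $\cE_\cA^T\cF_\cA-\cF_\cA^T\cE_\cA=J$ and factor out the outer $\cE_\cA^T(\,\cdot\,)\cE_\cA$, getting $\cE_\cA^T(E_\cA^{-T}F_\cA^T-F_\cA E_\cA^{-1})\cE_\cA=J$. The first identity in \eqref{relInTermsOfSubm} rewrites the middle factor as $E_\cA^{-T}F_\cA^T-F_\cA E_\cA^{-1}=-E_\cA^{-T}J E_\cA^{-1}$, so that $\cE_\cA^T E_\cA^{-T}J E_\cA^{-1}\cE_\cA=-J$. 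Expanding $G_\cA^TJG_\cA=\cE_\cA^T E_\cA^{-T}(LJL)E_\cA^{-1}\cE_\cA$ and using $LJL=-J$ then collapses this to $-(-J)=J$, i.e. $G_\cA\in Sp(d,\bR)$.

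For (i.3), symplecticity makes $G_\cA$ invertible, so $\cE_\cA=E_\cA L^{-1}G_\cA$ is a product of invertible matrices and lies in $GL(2d,\bR)$; taking determinants in $G_\cA=LE_\cA^{-1}\cE_\cA$ with $\det G_\cA=1$ and $\det L=(-1)^d$ gives $\det\cE_\cA=(-1)^d\det E_\cA$. Part (ii) is then obtained by the symmetry noted above, producing $\mathfrak{G}_\cA=L\cE_\cA^{-1}E_\cA\in Sp(d,\bR)$, $E_\cA\in GL(2d,\bR)$, and $\det E_\cA=(-1)^d\det\cE_\cA$, and the closing equivalence ``$E_\cA$ invertible $\iff\cE_\cA$ invertible'' drops out by combining the invertibility halves of (i.3) and (ii.3). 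The only step that demands any care is the algebraic collapse in (i.2): the proof hinges on recognizing that feeding (i.1) into the second relation reproduces precisely $E_\cA^{-T}J E_\cA^{-1}$ (through the first relation) and that $L$ conjugates $J$ to $-J$; once these two observations are in place, everything else is routine bookkeeping, and the precise sign $(-1)^d$ (rather than merely $\pm1$) relies essentially on the determinant-one property of symplectic matrices applied to $G_\cA$.
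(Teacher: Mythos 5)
Your proof is correct: all the algebra checks out, including the sign identity $LJL=-J$, the collapse $\cE_\cA^T E_\cA^{-T}JE_\cA^{-1}\cE_\cA=-J$ obtained by feeding (i.1) into the second relation of \eqref{relInTermsOfSubm}, and the use of $\det G_\cA=1$ to pin down the factor $(-1)^d=\det L$. The paper itself does not prove this lemma but defers to \cite[Lemma 2.6]{CGframes}, whose argument likewise proceeds from the block relations \eqref{relInTermsOfSubm} encoding the symplecticity of $\cA$; your derivation matches that route, with the added economy of the $(E_\cA,F_\cA)\leftrightarrow(\cE_\cA,\cF_\cA)$ symmetry reducing part (ii) to part (i) instead of repeating the computation.
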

The action of $W_\cA$ on the time-frequency shift $\pi(w)$ can be exhibited in detail, thanks to the sub-blocks above, as explained in what follows.
\begin{lemma}\label{commpiWA}
	Let $W_\cA$ be a metaplectic Wigner distribution with $\cA=\pi^{Mp}(\hat\cA)$ having block decomposition \eqref{blockA}. Then, for $w\in\rdd$, $f,g\in L^2(\rd)$, we have
	\[
	W_\cA(\pi(w)f,g)=\Phi_{-M_\cA}(w)\pi(E_\cA w,F_\cA w)W_\cA(f,g),
	\]
	where $M_\cA$ is the symmetric matrix
	\begin{equation}\label{defMA}
		M_\cA=\begin{pmatrix}
			A_{11}^TA_{31}+A_{21}^TA_{41} & A_{31}^TA_{13}+A_{41}^TA_{23}\\
			A_{13}^TA_{31}+A_{23}^TA_{41} & A_{13}^TA_{33}+A_{23}^TA_{43}
		\end{pmatrix}.
	\end{equation}
\end{lemma}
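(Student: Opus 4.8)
The plan is to track how a time-frequency shift on the first entry of $W_\cA(f,g)=\hat\cA(f\otimes\bar g)$ propagates through the metaplectic operator $\hat\cA$, using the intertwining relation \eqref{muAdef} as the engine and bookkeeping the scalar phases via the \srep{}. Since all the operations involved (Schrödinger representation, tensor factorization, metaplectic intertwining) are valid on $L^2$, no density argument is needed.

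First I would pass from $\pi$ to $\rho$ on the single-variable side. As $\rho(x,\xi;0)=e^{-\pi i\xi x}\pi(x,\xi)$, writing $w=(w_1,w_2)$ gives $\pi(w)f=e^{\pi i w_1 w_2}\rho(w;0)f$. Tensoring with $\bar g=\rho(0;0)\bar g$ and invoking the tensor-product identity for $\rho$ recalled in Subsection~\ref{subsec:26}, I obtain
\[
(\pi(w)f)\otimes\bar g=e^{\pi i w_1 w_2}\,\rho(\tilde w;0)(f\otimes\bar g),\qquad \tilde w=(w_1,0,w_2,0)\in\bR^{4d}.
\]
Applying $\hat\cA$ to both sides and using \eqref{muAdef} in the form $\hat\cA\rho(\tilde w;0)=\rho(\cA\tilde w;0)\hat\cA$ moves the shift outside:
\[
W_\cA(\pi(w)f,g)=e^{\pi i w_1 w_2}\,\rho(\cA\tilde w;0)\,W_\cA(f,g).
\]

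Next I would identify $\cA\tilde w$. Reading off the block decomposition \eqref{blockA} against the zero pattern of $\tilde w$, only the first and third block-columns of $\cA$ survive, and these assemble precisely into the sub-matrices $E_\cA$ and $F_\cA$ of \eqref{defEAFA}; that is, $\cA\tilde w=(E_\cA w,F_\cA w)$. Converting back via $\rho(E_\cA w,F_\cA w;0)=e^{-\pi i(F_\cA w)\cdot(E_\cA w)}\pi(E_\cA w,F_\cA w)$ then produces the claimed shift $\pi(E_\cA w,F_\cA w)$ acting on $W_\cA(f,g)$, together with the scalar phase $\exp\{\pi i[\,w_1\cdot w_2-(E_\cA w)\cdot(F_\cA w)\,]\}$.

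The main point — and the only place where symplecticity enters — is to recognize this phase as $\Phi_{-M_\cA}(w)=e^{-\pi i\,w\cdot M_\cA w}$. Writing $w_1\cdot w_2=\tfrac12 w^TLw$ and $(E_\cA w)\cdot(F_\cA w)=w^TE_\cA^TF_\cA w$, the exponent equals $-\pi i\,w^T\!\big(E_\cA^TF_\cA-\tfrac12 L\big)w$, so it suffices to check that $M_\cA$ equals the symmetric part of $E_\cA^TF_\cA-\tfrac12 L$. Here I would use the first relation in \eqref{relInTermsOfSubm}, namely $E_\cA^TF_\cA-F_\cA^TE_\cA=J$: it forces the diagonal blocks of $E_\cA^TF_\cA$ to be symmetric (matching the diagonal blocks of $M_\cA$ in \eqref{defMA}), while on the off-diagonal blocks the constant $\tfrac12 L$ exactly cancels the contribution of the $J$-antisymmetry, pinning down the remaining blocks. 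A direct block comparison then yields $\mathrm{sym}(E_\cA^TF_\cA-\tfrac12 L)=M_\cA$, completing the identification. The expected obstacle is purely this phase bookkeeping: keeping the two conventions $\rho\leftrightarrow\pi$ consistent on $\rd$ and on $\rdd$, and applying the symplectic relation to the correct block so that the spurious $\tfrac12$-shifts cancel.
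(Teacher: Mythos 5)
Your proof is correct, and it follows the standard route — the paper itself only recalls this lemma from \cite{CGframes} without proof, but the argument there is precisely this one: conjugate $\rho(w_1,0,w_2,0;0)$ through $\hat\cA$ via \eqref{muAdef}, read off $\cA\tilde w=(E_\cA w,F_\cA w)$, and use $E_\cA^TF_\cA-F_\cA^TE_\cA=J$ to identify the accumulated phase $e^{\pi i[w_1w_2-(E_\cA w)(F_\cA w)]}$ with $\Phi_{-M_\cA}(w)$. Your phase bookkeeping checks out: $\mathrm{sym}(E_\cA^TF_\cA-\tfrac12L)=E_\cA^TF_\cA-\tfrac12J-\tfrac12L=E_\cA^TF_\cA-\left(\begin{smallmatrix}0&I\\0&0\end{smallmatrix}\right)=M_\cA$, consistent with \eqref{defMA} and with the block identities listed in Section 3.
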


Recall that $W_\cA$ is shift-invertible if $E_\cA\in GL(2d,\bR)$. 
For the STFT we obtain:
	$$ E_{A_{ST}}=I_{2d\times2d}=\left(\begin{array}{cc}
	I_{d\times d} & 0_{d\times d}\\
	0_{d\times d} &I_{d\times d}
\end{array}\right),
$$
which is invertible, whereas for the  $\tau$-Wigner distributions we have:
$$E_\tau:=E_{{ A}_{\tau}}=\left(\begin{array}{cc}
	(1-\tau) I_{d\times d} & 0_{d\times d}\\
	0_{d\times d} & \tau I_{d\times d}
\end{array}\right),$$
which is invertible if and only if $\tau\not=0$ and $\tau\not=1$, the cases of the Rihacek and conjugate-Rihacek distributions, which do not define modulation spaces.\\

We established the notation to state the characterization of shift-invertible matrices. The proof is based on the properties enjoyed by symplectic matrices and requires many computations, detailed in \cite[Theorem 4.2 and Corollary 4.3]{CGframes}.

\begin{theorem}\label{GiuLaMaschera0}
	Let $W_\cA$ be a shift-invertible metaplectic Wigner distribution and $G_\cA=LE_\cA^{-1}\cE_\cA$ be the matrix of Lemma \ref{lemma44}, with  $L$  as in \eqref{defL}.
	Then,
	\[
	\cA=\cD_{E_\cA^{-1}}V_{M_\cA}V_L^T \Lift(G_\cA),
	\]
	where $\Lift(G_\cA)$ is defined in \eqref{liftmatrix}.\par
	In particular, $W_\cA$ is shift-invertible if and only if, up to a sign,
	\begin{equation}\label{WA-STFT}
		W_\cA(f,g)=\mathfrak{T}_{E_\cA^{-1}}\Phi_{M_\cA+L}V_{\widehat{\delta_\cA} g}f,\quad f,g\in L^2(\rd),
	\end{equation}
	where
	\begin{equation}\label{defSA}
		\widehat{\delta_\cA} g:=\cF\widehat{\overline{G_\cA}}g,
	\end{equation}
	and $\widehat{\overline{G_\cA}}$ is the metaplectic operator with   $\pi^{Mp}(\widehat{\overline{G_\cA}})=\overline{G_\cA}$ where, assuming $G_\cA$ with the block decomposition in \eqref{Ga},
		\[
\overline{G_\cA}:=\begin{pmatrix}
	G_{\cA_{11}} &  - G_{\cA_{12}}\\
-	G_{\cA_{21}} &  G_{\cA_{22}}
\end{pmatrix}
	\] 
	(it is the matrix  $G_\cA$ with the second diagonal multiplied by $-1$).
\end{theorem}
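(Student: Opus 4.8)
\emph{Strategy.} The plan is to split the statement into its algebraic core, the symplectic factorization $\cA=\cD_{E_\cA^{-1}}V_{M_\cA}V_L^T\Lift(G_\cA)$, and its operator consequence \eqref{WA-STFT}. Once the factorization is established, the formula for $W_\cA$ follows by lifting the identity through the homomorphism $\pi^{Mp}$ and unwinding each factor by means of Example \ref{es22} and the lifting identity $\widehat{\Lift(G_\cA)}(f\otimes h)=f\otimes\widehat{G_\cA}h$.

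First I would prove the matrix identity. Each factor on the right-hand side is symplectic: $\cD_{E_\cA^{-1}}$, $V_{M_\cA}$ and $V_L^T$ are the generators recalled in \eqref{defDLVC} (here $M_\cA\in Sym(2d,\bR)$ by Lemma \ref{commpiWA} and $L$ is symmetric), while $\Lift(G_\cA)$ is symplectic because $G_\cA=LE_\cA^{-1}\cE_\cA$ is symplectic by Lemma \ref{lemma44}(i.2). Hence the product is symplectic, and by Remark \ref{remK} — the reshuffling matrix $\cK$ being invertible — it suffices to check that the product shares the four sub-blocks $E_\cA,\cE_\cA,F_\cA,\cF_\cA$ of $\cA$. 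Expanding the $4d\times4d$ block product and reading off these sub-blocks, the identities I would invoke are $\cE_\cA=E_\cA LG_\cA$ (the definition of $G_\cA$ rearranged via $L=L^{-1}$), $\cF_\cA=E_\cA^{-T}F_\cA^T\cE_\cA$ from Lemma \ref{lemma44}(i.1), and the symplectic relations \eqref{relInTermsOfSubm}; the factor $V_{M_\cA}$ is exactly what produces the correct $F_\cA$-block, and matching it against the explicit form \eqref{defMA} of $M_\cA$ is the delicate point. This block bookkeeping is the main obstacle: it is elementary but the only genuinely computational part, and it simultaneously pins down that the $E$-block of the product equals $E_\cA$. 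Read backwards, the latter fact yields the converse half of the ``if and only if'': any distribution written in the form \eqref{WA-STFT} corresponds to a symplectic matrix $\cD_{E^{-1}}V_MV_L^T\Lift(G)$ whose $E$-block is the invertible $E$, hence it is shift-invertible.

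Next I would lift the factorization. Since $\pi^{Mp}$ is a homomorphism with kernel $\{\pm\mathrm{id}_{L^2}\}$, the symplectic factorization lifts to a metaplectic one up to a global sign, and identifying each factor through Example \ref{es22} yields $\hat\cA=\pm\,\mathfrak{T}_{E_\cA^{-1}}\phi_{M_\cA}\psi_L\widehat{\Lift(G_\cA)}$. Applying this to $f\otimes\bar g$ and using $\widehat{\Lift(G_\cA)}(f\otimes\bar g)=f\otimes\widehat{G_\cA}\bar g$ reduces the whole computation to evaluating $\psi_L(f\otimes h)$ with $h=\widehat{G_\cA}\bar g$.

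The crux is then the clean Fourier identity $\psi_L(f\otimes h)=\Phi_L\,V_{\overline{\cF^{-1}h}}f$. Writing $\psi_L=\cF\Phi_{-L}\cF^{-1}$ on $L^2(\rdd)$ and carrying out the partial transforms variable by variable — with $\Phi_{-L}(z_1,z_2)=e^{-2\pi iz_1z_2}$ and $\Phi_L(x,\xi)=e^{2\pi ix\xi}$ — collapses the double integral to $e^{2\pi ix\xi}\int f(t)\,\overline{\cF^{-1}h}(t-x)\,e^{-2\pi i\xi t}\,dt$, which is precisely $\Phi_L\,V_{\overline{\cF^{-1}h}}f$. It remains to identify the window: using $\overline{\cF^{-1}k}=\cF\bar k$ one gets $\overline{\cF^{-1}h}=\cF\,\overline{\widehat{G_\cA}\bar g}$, and the metaplectic conjugation rule $C\hat S C=\widehat{RSR}$ with $R=\mathrm{diag}(I_{d\times d},-I_{d\times d})$ — a consequence of $\overline{\pi(x,\xi)f}=\pi(x,-\xi)\bar f$ combined with \eqref{muAdef} — gives $\overline{\widehat{G_\cA}\bar g}=\widehat{RG_\cA R}\,g$ up to sign. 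A short block computation shows $RG_\cA R=\overline{G_\cA}$, the matrix with the off-diagonal blocks negated, so $\overline{\cF^{-1}h}=\cF\widehat{\overline{G_\cA}}g=\widehat{\delta_\cA}g$ by \eqref{defSA}. Substituting back and using $\Phi_{M_\cA}\Phi_L=\Phi_{M_\cA+L}$ delivers $W_\cA(f,g)=\pm\,\mathfrak{T}_{E_\cA^{-1}}\Phi_{M_\cA+L}V_{\widehat{\delta_\cA}g}f$, i.e. \eqref{WA-STFT}. This chain is the forward implication (valid whenever $E_\cA\in GL(2d,\bR)$), and together with the converse extracted from the block identities it completes the equivalence. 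Beyond the matrix bookkeeping, the only place demanding care is tracking the conjugations and the overall sign through this last step.
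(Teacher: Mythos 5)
Your proposal is correct and takes essentially the same route as the paper's proof (which is only sketched here, the block computations being deferred to \cite[Theorem 4.2 and Corollary 4.3]{CGframes}): verify the factorization $\cA=\cD_{E_\cA^{-1}}V_{M_\cA}V_L^T\Lift(G_\cA)$ by symplectic block bookkeeping via Lemma \ref{lemma44}, the relations \eqref{relInTermsOfSubm} and the Section 3 identities (e.g.\ $\cE_\cA=E_\cA LG_\cA$, $M_\cA=E_\cA^TF_\cA$ minus the nilpotent correction, whose symmetry follows from $E_\cA^TF_\cA-F_\cA^TE_\cA=J$), then lift through $\pi^{Mp}$ up to a sign and unwind the factors with Example \ref{es22} and the lifting identity. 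Your two computational pivots check out, namely $\psi_L(f\otimes h)=\Phi_L\,V_{\overline{\cF^{-1}h}}f$ and $\overline{\widehat{G_\cA}\,\bar g\,}=\pm\,\widehat{\overline{G_\cA}}\,g$ via $RG_\cA R=\overline{G_\cA}$ with $R=\mathrm{diag}(I_{d\times d},-I_{d\times d})$, which is precisely how the window $\widehat{\delta_\cA}g=\cF\widehat{\overline{G_\cA}}g$ of \eqref{defSA} arises.
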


\section{The submanifold of shift-invertible symplectic matrices}

We denote with
\[
	Sp_{inv}(2d,\bR):=\{\cA\in Sp(2d,\bR) \ : \ E_\cA\in GL(2d,\bR)\}
\]
the set of shift-invertible symplectic matrices $4d\times4d$.

\begin{remark}
	$Sp_{inv}(2d,\bR)$ is not a subgroup of $Sp(2d,\bR)$. Indeed, $A_{ST}\in Sp_{inv}(2d,\bR)$, whereas $A_{ST}^3$ is not shift-invertible. However, it is an open subset of $Sp(2d,\bR)$, since the mapping $\cA\in Sp(2d,\bR)\mapsto \det(E_\cA)\in\bR$ is smooth. Consequently, $Sp_{inv}(2d,\bR)$ is a submanifold of $Sp(2d,\bR)$ of dimension $2d(4d+1)$. 
\end{remark}

Another way to read Theorem \ref{GiuLaMaschera} is that any $\cA\in Sp_{inv}(2d,\bR)$ can be written as:
\[
	\cA=\cD_{E_\cA^{-1}}V_{M_\cA}V_L^T\Lift(G_\cA),
\]
where $E_\cA\in GL(2d,\bR)$, $M_\cA\in Sym(2d,\bR)$ and $G_\cA\in Sp(d,\bR)$. Therefore, shift-invertible distributions are described by these three matrices, rather than the blocks of $\cA$. The possibility of defining a shift-invertible Wigner distribution with arbitrary $E_\cA$ and $G_\cA$ is fundamental for the study of metaplectic Gabor frames, cf \cite{CGframes}.\\

The four submatrices $E_\cA$, $F_\cA$, $\cE_\cA$ and $\cF_\cA$ in (\ref{defEAFA}, \ref{defeafa}) determine the three submatrices $M_\cA$, $G_\cA$ and $E_\cA$ in Lemma \ref{lemma44} and (\ref{defMA}), and vice versa. Namely, 
\begin{align*}
	& M_\cA = E_\cA^T F_\cA -\begin{pmatrix} 0_{d	\times d} & I_{d	\times d}\\ 0_{d	\times d} & 0_{d	\times d}\end{pmatrix},\\
	& G_\cA = LE_\cA^{-1}\cE_\cA,
\end{align*}
whereas, by the definitions of $G_\cA$ and $M_\cA$, and by Lemma \ref{lemma44} $(i)$, 
\begin{align*}
	& \cE_\cA = E_\cA L G_\cA,\\
	& F_\cA = E_\cA^{-T}\left(M_\cA+\begin{pmatrix} 0_{d	\times d} & I_{d	\times d}\\ 0_{d	\times d} & 0_{d	\times d}\end{pmatrix}\right),\\
	& \cF_\cA = E_\cA^{-T}\left(M_\cA+\begin{pmatrix} 0_{d	\times d} & 0_{d	\times d}\\ I_{d	\times d} & 0_{d	\times d}\end{pmatrix}\right)LG_\cA.
\end{align*}

In this section, we prove rigorously that every triple $E\in GL(2d,\bR)$, $C\in Sym(2d,\bR)$ and $S\in Sp(d,\bR)$ determines uniquely a symplectic matrix in $Sp_{inv}(2d,\bR)$.\\

To simplify the notation, let
\[
	CG(2d,\bR):=GL(2d,\bR)\times Sym(2d,\bR)\times Sp(d,\bR),
\]
and denote:
\begin{equation}\label{defcAmap}
	\alpha(E,C,S):=\cD_{E^{-1}}V_CV_L^T\Lift(S),
\end{equation}
where $L$ is defined as in \eqref{defL}.

\begin{theorem}
	The mapping $\alpha:(E,C,S)\mapsto \cA(E,C,S)$ is a set bijection from $CG(2d,\bR)$ to $Sp_{inv}(2d,\bR)$. 
\end{theorem}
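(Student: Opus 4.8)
The plan is to exhibit an explicit two-sided inverse of $\alpha$. Define the map $\beta:Sp_{inv}(2d,\bR)\to CG(2d,\bR)$ by $\beta(\cA)=(E_\cA,M_\cA,G_\cA)$, where $E_\cA$ is the submatrix in \eqref{defEAFA}, $M_\cA$ is the symmetric matrix in \eqref{defMA}, and $G_\cA=LE_\cA^{-1}\cE_\cA$. This is well defined: $E_\cA\in GL(2d,\bR)$ by the very definition of $Sp_{inv}(2d,\bR)$, $M_\cA\in Sym(2d,\bR)$ by Lemma \ref{commpiWA}, and $G_\cA\in Sp(d,\bR)$ by Lemma \ref{lemma44}~(i.2). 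Showing that $\alpha$ and $\beta$ are mutually inverse yields the bijection at once. One composition is immediate: for $\cA\in Sp_{inv}(2d,\bR)$, the factorization of Theorem \ref{GiuLaMaschera0} reads $\alpha(\beta(\cA))=\cD_{E_\cA^{-1}}V_{M_\cA}V_L^T\Lift(G_\cA)=\cA$, so $\alpha\circ\beta=\mathrm{id}$.

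The heart of the proof is the reverse identity $\beta\circ\alpha=\mathrm{id}$, which at the same time shows that $\alpha$ actually lands in $Sp_{inv}(2d,\bR)$. Fix $(E,C,S)\in CG(2d,\bR)$ and set $\cA=\alpha(E,C,S)=\cD_{E^{-1}}V_CV_L^T\Lift(S)$; being a product of symplectic matrices, $\cA\in Sp(2d,\bR)$. To extract $E_\cA,\cE_\cA,F_\cA$ I would use Remark \ref{remK}, according to which these are the $2d\times2d$ blocks of $\cA\cK^{-1}=\cA\cK$ (the permutation $\cK$ being an involution). First I multiply the three left factors, which are block triangular, obtaining $\cD_{E^{-1}}V_CV_L^T=\begin{pmatrix} E & EL\\ E^{-T}C & E^{-T}(CL+I)\end{pmatrix}$. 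Then I rewrite $\Lift(S)$ in $2d\times2d$ blocks and multiply on the right by it and by $\cK$. This rewriting is the one delicate point, since \eqref{liftmatrix} is displayed in $d\times d$ blocks that interleave the two $L^2(\rd)$ factors. After the dust settles the products telescope and one finds $E_\cA=E$, $\cE_\cA=ELS$, and $F_\cA=E^{-T}\bigl(C+\begin{pmatrix}0&I\\0&0\end{pmatrix}\bigr)$.

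Since $E_\cA=E\in GL(2d,\bR)$, the matrix $\cA$ is shift-invertible, so $\alpha$ maps $CG(2d,\bR)$ into $Sp_{inv}(2d,\bR)$ and the relation formulas recalled before the statement apply to it. They give $G_\cA=LE_\cA^{-1}\cE_\cA=LE^{-1}(ELS)=L^2S=S$ (using $L^2=I$) and $M_\cA=E_\cA^TF_\cA-\begin{pmatrix}0&I\\0&0\end{pmatrix}=C$, so $\beta(\alpha(E,C,S))=(E,C,S)$. Combined with $\alpha\circ\beta=\mathrm{id}$, this shows $\alpha$ is a bijection with inverse $\beta$. I expect the only genuine obstacle to be the bookkeeping of the middle paragraph: correctly aligning the non-symplectic permutation $\cK$ with the two block partitions ($d\times d$ for $\Lift(S)$ versus $2d\times2d$ for the remaining factors). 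Once these conventions are fixed, every matrix product reduces to a short computation and the three target matrices $E$, $C$, $S$ reappear directly.
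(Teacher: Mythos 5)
Your proposal is correct and follows essentially the same route as the paper: the key step in both is the block computation $\cD_{E^{-1}}V_CV_L^T\Lift(S)=\bigl(\begin{smallmatrix} E & ELS\\ \ast & \ast \end{smallmatrix}\bigr)\cK$, combined with the factorization of Theorem \ref{GiuLaMaschera0} for surjectivity. The only cosmetic difference is that you recover $C$ by computing $F_\cA$ explicitly and using $M_\cA=E_\cA^TF_\cA-\bigl(\begin{smallmatrix}0&I\\0&0\end{smallmatrix}\bigr)$, whereas the paper recovers it by cancelling the remaining factors to get $V_{C_1}=V_{C_2}$; both are valid.
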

\begin{proof}
	We first observe that $\cA=\alpha(E,C,S)\in Sp_{inv}(2d,\bR)$ for every $(E,C,S)\in CG(2d,\bR)$ with $E_{\cA}=E$. Indeed, a simple computation shows that:
	\begin{equation}\label{eq456}
		\cD_{E^{-1}}V_CV_L^T\Lift(S)=\begin{pmatrix} E & ELS\\ \ast & \ast \end{pmatrix}\mathcal{K},
	\end{equation}
	where $\cK$ is defined as in Remark \ref{remK}.
	This highlights that $E_\cA=E$ and $G_\cA=S$. In particular, $\cA\in Sp_{inv}(2d,\bR)$. 
	
	If $\cA\in Sp_{inv}(2d,\bR)$, it follows by Theorem \ref{GiuLaMaschera} that $\cA=\alpha(E_\cA,M_\cA,G_\cA)$, with $(E_\cA,M_\cA,G_\cA)\in CG(2d,\bR)$, so the surjectivity of $\alpha$ follows. To prove the injectivity, observe that if $\alpha(E_1,C_1,S_1)=\alpha(E_2,C_2,S_2)$, then
	\[
		\begin{pmatrix} E_1 & E_1LS_1\\ \ast & \ast \end{pmatrix}
		=
		\begin{pmatrix} E_2 & E_2LS_2\\ \ast & \ast \end{pmatrix}
	\]
	by (\ref{eq456}). This entails that if $\alpha(E_1,C_1,S_1)=\alpha(E_2,C_2,S_2)$, then $E_1=E_2$ and, consequently, $S_1=S_2$. It remains to prove that $C_1= C_2$. We have:
	\[
		\cD_{E_1^{-1}}V_{C_1}V_L^T\Lift(S_1)=\cD_{E_2^{-1}}V_{C_2}V_L^T\Lift(S_2)
	\]
	if and only if
	\[
		V_{C_1}=\cD_{E_1^{-1}}^{-1}\cD_{E_2^{-1}}V_{C_2}V_L^T\Lift(S_2)\Lift(S_1)^{-1}V_L^{-T}.
	\]
Since $E_1=E_2$ and $S_1=S_2$, we obtain
	\[
		V_{C_1}=V_{C_2}
	\]
	and, therefore, $C_1 = C_2$.
\end{proof}

\begin{corollary}
	A metaplectic Wigner distribution $W_{\cA}$ is shift-invertible if and only if 
	\begin{equation}\label{eq654}
		W_\cA(f,g)(z)=|\det(E)|^{-1}\Phi_{C}(E^{-1}z)V_{\hat \delta g}f(E^{-1}z), \qquad f,g\in L^2(\rd), \ z\in\rdd,
	\end{equation}
	for some $E\in GL(2d,\bR)$, $C\in Sym(2d,\bR)$ and $\hat\delta\in Mp(d,\bR)$. 
\end{corollary}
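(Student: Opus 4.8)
The plan is to read this corollary as the distribution-level translation of the matrix-level bijection theorem just proved, with Theorem \ref{GiuLaMaschera0} serving as the dictionary between $\cA$ and $W_\cA$. Concretely, I would establish the chain of equivalences: $W_\cA$ is shift-invertible $\iff$ $E_\cA\in GL(2d,\bR)$ $\iff$ $\cA\in Sp_{inv}(2d,\bR)=\alpha(CG(2d,\bR))$ $\iff$ $\cA=\cD_{E^{-1}}V_M V_L^T\Lift(S)$ for some $(E,M,S)\in CG(2d,\bR)$ $\iff$ \eqref{eq654} holds. The two nontrivial arrows are the passage between the factorization of $\cA$ and the pointwise formula for $W_\cA$, which is exactly what Theorem \ref{GiuLaMaschera0} encodes.

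For the forward implication I would invoke Theorem \ref{GiuLaMaschera0} directly: shift-invertibility gives formula \eqref{WA-STFT}, and unwinding the dilation $\mathfrak{T}_{E_\cA^{-1}}$ (reading off its normalizing constant and the change of variable $z\mapsto E_\cA^{-1}z$) turns \eqref{WA-STFT} into \eqref{eq654}, with $E=E_\cA$, $C=M_\cA+L$, and $\hat\delta=\widehat{\delta_\cA}=\cF\widehat{\overline{G_\cA}}$. The residual sign in \eqref{WA-STFT} is absorbed by replacing the metaplectic lift $\hat\delta$ by $-\hat\delta$, which flips the sign of $V_{\hat\delta g}f$ because the STFT is conjugate-linear in its window.

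The converse is where the parameter dictionary must be checked to be exhaustive. Given arbitrary $E\in GL(2d,\bR)$, $C\in Sym(2d,\bR)$ and $\hat\delta\in Mp(d,\bR)$, I would produce the triple feeding into $\alpha$: set $M:=C-L\in Sym(2d,\bR)$ (legitimate since $L$ is symmetric) and let $S\in Sp(d,\bR)$ be obtained by negating the off-diagonal blocks of $\pi^{Mp}(\cF^{-1}\hat\delta)$, so that $\hat\delta=\cF\widehat{\overline S}$. Here one must observe that $G\mapsto\overline G$ is a symplectic involution — it equals conjugation $G\mapsto DGD$ by the anti-symplectic diagonal $D=\mathrm{diag}(I_d,-I_d)$, and a short computation using $DJD=-J$ shows it preserves $Sp(d,\bR)$ — so $S$ is genuinely symplectic, and that as $S$ ranges over $Sp(d,\bR)$ the operator $\cF\widehat{\overline S}$ ranges over all of $Mp(d,\bR)$ up to sign; this is precisely what makes ``for some $\hat\delta\in Mp(d,\bR)$'' equivalent to ``for some $S\in Sp(d,\bR)$.'' Setting $\cA':=\alpha(E,M,S)$, the bijection theorem gives $\cA'\in Sp_{inv}(2d,\bR)$ with $E_{\cA'}=E$, $M_{\cA'}=M$, $G_{\cA'}=S$, and Theorem \ref{GiuLaMaschera0} yields $W_{\cA'}$ equal to the right-hand side of \eqref{eq654}.

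To close the converse I would use the injectivity of $\cA\mapsto W_\cA$ at the level of symplectic matrices: if the given metaplectic $W_\cA$ satisfies \eqref{eq654}, then $W_\cA=W_{\cA'}$, i.e. $\hat\cA(f\otimes\bar g)=\hat{\cA'}(f\otimes\bar g)$ for all $f,g\in L^2(\rd)$; since $\mathrm{span}\{f\otimes\bar g\}$ is dense in $L^2(\rdd)$ and both operators are bounded, $\hat\cA=\pm\hat{\cA'}$, whence $\cA=\cA'\in Sp_{inv}(2d,\bR)$ and $W_\cA$ is shift-invertible. The main obstacle is not a hard estimate but making this dictionary watertight: verifying that $S\mapsto\cF\widehat{\overline S}$ surjects onto $Mp(d,\bR)$, that the chirp matrix $C$ of \eqref{eq654} corresponds to $M_\cA+L$ rather than to $M_\cA$ itself, and keeping track of the determinant normalization hidden in $\mathfrak{T}_{E^{-1}}$.
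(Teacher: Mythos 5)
Your proposal is correct and follows essentially the same route as the paper: the forward direction is a direct unwinding of \eqref{WA-STFT}, and the converse amounts to recognizing the right-hand side of \eqref{eq654} as $\cD_{E^{-1}}V_{C-L}V_L^T\Lift(S)$ with $S$ symplectic and $\hat\delta=\cF\widehat{\overline{S}}$ up to sign, which is exactly the factorization the paper computes via $A_{ST}=V_{-L}V_L^T\cA_{FT2}$. Your extra density/uniqueness step identifying $\cA$ with $\cA'$ is a harmless repackaging of the paper's direct computation of the symplectic matrix associated to \eqref{eq654}.
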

\begin{proof}
By (\ref{WA-STFT}), if $W_\cA$ is shift-invertible, then (\ref{eq654}) holds for the triple $E_\cA, M_\cA+L$ and $\widehat{\delta_\cA}$. It remains to check that (\ref{eq654}) defines a shift-invertible metaplectic Wigner distribution. Let $W_\cA$ be as in (\ref{eq654}) and $\cA$ be the related symplectic matrix. Then,
\begin{align*}
	\cA&=\cD_{E^{-1}}V_CA_{ST}\Lift(\bar\delta)=\cD_{E^{-1}}V_CV_{-L}V_L^T\cA_{FT2}\Lift(\bar\delta)\\
	&=\cD_{E^{-1}}V_{C-L}V_L^T\Lift(J\bar\delta)
\end{align*}
where $\overline\delta$ is the projection of the metaplectic operator:
\[
	\overline{\hat\delta g}=\hat{\bar \delta}{\bar g}.
\]

Thus, the shift-invertibility of $W_\cA$ follows by Theorem \ref{GiuLaMaschera}.
\end{proof}

In view of the issues above, the characterization of modulation spaces can be easily rephrased as follows.
\begin{theorem}\label{thm987}
	Consider $0<p,q\leq\infty$, $m\in\cM_v(\rdd)$ with $m\asymp m\circ E^{-1}$, $g\in \cS(\rd)\setminus\{0\}$. For any $f\in\cS'(\rd)$,  define 
	\[
		W_\cA(f,g)(z)=|\det(E)|^{-1}\Phi_{C}(E^{-1}z)V_{\hat \delta g}f(E^{-1}z), \qquad  z\in\rdd,
	\]
where $E\in GL(2d,\bR)$, $C\in Sym(2d,\bR)$ and $\hat\delta\in Mp(d,\bR)$. \\
	(i) If $E$ is upper-triangular, then $$\norm{f}_{M^{p,q}_m}\asymp \norm{W_\cA(f,g)}_{L^{p,q}_m}.$$\\
	(ii) If $p=q$, the upper triangularity assumption in (i) can be dropped.
\end{theorem}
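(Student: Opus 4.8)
The plan is to read the claim straight off the explicit representation \eqref{eq654}, which is taken as the \emph{definition} of $W_\cA(f,g)$ in the statement, so no metaplectic machinery is needed here: it suffices to peel off, one at a time, the three operations appearing in \eqref{eq654}---multiplication by the chirp $\Phi_C(E^{-1}\cdot)$, dilation by $E^{-1}$, and the change of window from $g$ to $\hat\delta g$---each controlled by a fact already available. First I would dispose of the chirp. Since $\Phi_C(t)=e^{\pi i t\cdot Ct}$ satisfies $|\Phi_C(t)|=1$ for all $t$, the identity
\[
|W_\cA(f,g)(z)|=|\det(E)|^{-1}\,|V_{\hat\delta g}f(E^{-1}z)|,\qquad z\in\rdd,
\]
holds pointwise; this is legitimate even for $f\in\cS'(\rd)$, because for a Schwartz window the STFT $V_{\hat\delta g}f$ is a continuous function, so both sides are honest functions and the modulus makes sense. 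Consequently multiplication by $\Phi_C(E^{-1}\cdot)$ leaves every $L^{p,q}_m$-(quasi-)norm invariant, and it remains to compare $\norm{(V_{\hat\delta g}f)\circ E^{-1}}_{L^{p,q}_m}$ with $\norm{f}_{M^{p,q}_m}$.

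Next I would remove the change of variables $z\mapsto E^{-1}z$. Writing $F=V_{\hat\delta g}f$, the map $F\mapsto |\det(E)|^{-1}\,F\circ E^{-1}$ differs from the dilation $\mathfrak{T}_{E^{-1}}$ of Proposition \ref{thmA12} only by the harmless scalar factor $|\det(E)|^{-1/2}$. In case (i) the hypothesis that $E$ be upper triangular forces $E^{-1}$ upper triangular, so Proposition \ref{thmA12}(i) gives that $\mathfrak{T}_{E^{-1}}$ is an isomorphism of $L^{p,q}(\rdd)$; combining this with the weight compatibility $m\asymp m\circ E^{-1}$ and Proposition \ref{thmA12}(ii), $\mathfrak{T}_{E^{-1}}$ is also a (quasi-)norm isomorphism of $L^{p,q}_m(\rdd)$. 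Hence $\norm{(V_{\hat\delta g}f)\circ E^{-1}}_{L^{p,q}_m}\asymp \norm{V_{\hat\delta g}f}_{L^{p,q}_m}$.

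It then remains to change the window. Because $g\in\cS(\rd)\setminus\{0\}$ and $\hat\delta\in Mp(d,\bR)$, Proposition \ref{Folland427} ensures $\hat\delta g\in\cS(\rd)$, and $\hat\delta g\neq0$ since $\hat\delta$ is invertible; thus $\hat\delta g$ is an admissible Schwartz window. By the window-independence of the modulation-space (quasi-)norm recalled in Section \ref{subsec:MSs}, $\norm{V_{\hat\delta g}f}_{L^{p,q}_m}\asymp \norm{V_g f}_{L^{p,q}_m}=\norm{f}_{M^{p,q}_m}$. Chaining the three equivalences yields (i). For (ii), when $p=q$ one has $L^{p,q}_m=L^p_m$, and for \emph{any} $E\in GL(2d,\bR)$ the substitution $z\mapsto E^{-1}z$ merely rescales the $L^p_m$-norm by the Jacobian constant $|\det(E)|^{1/2}$ (using $m\asymp m\circ E^{-1}$ through Proposition \ref{thmA12}(ii) to absorb the weight); thus $\mathfrak{T}_{E^{-1}}$ is bounded with bounded inverse on $L^p_m$ with no triangularity hypothesis, which is exactly why the assumption on $E$ can be dropped.

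The substantive work has already been done in the factorization \eqref{eq654} and in Proposition \ref{thmA12}, so the remaining care is minor: (a) verifying that the identities hold for tempered-distribution inputs $f$ rather than only $f\in L^2(\rd)$, and (b) checking that both the window-independence and the dilation estimate persist throughout the whole quasi-Banach range $0<p,q\le\infty$. I expect (b)---specifically the interaction of the dilation by $E^{-1}$ with the mixed-norm structure when $p\neq q$---to be the only delicate point, and it is precisely where the upper-triangularity of $E$ in (i) is unavoidable: for $p\neq q$ a non-triangular dilation need not preserve $L^{p,q}$, as the $J$-example in the introduction already illustrates.
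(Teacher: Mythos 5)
Your proof is correct, and it takes a genuinely more direct route than the paper's. The paper obtains Theorem \ref{thm987} as a rephrasing of Theorem \ref{thmF}, combined with the Corollary identifying expressions of the form \eqref{eq654} with shift-invertible metaplectic Wigner distributions; Theorem \ref{thmF} in turn rests on the reproducing formula \eqref{e6} of Lemma \ref{intertFormula}, which is needed because there $W_\cA$ is given only through the blocks of $\cA$. You instead exploit the fact that the statement of Theorem \ref{thm987} already hands you the explicit factorization, so you can peel off the unimodular chirp, apply Proposition \ref{thmA12} to the dilation by $E^{-1}$ (upper triangular whenever $E$ is, and with $m\circ E^{-1}\asymp m$ feeding part (ii) of that proposition), and finish with window independence of the modulation (quasi-)norm for the window $\hat\delta g$, which is a nonzero Schwartz function by Proposition \ref{Folland427} and the invertibility of $\hat\delta$. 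This bypasses the reproducing formula entirely and is shorter; the price is that the nontrivial content is shifted into the quoted window-independence result, which in the quasi-Banach range $0<p,q<1$ is itself established by a convolution/covering argument of Galperin--Samarah type --- exactly the delicate point you flag in (b). With that standard fact granted for Schwartz windows, your chain of three equivalences is complete and valid for all $0<p,q\le\infty$, and your treatment of the case $p=q$ (any invertible $E$, Jacobian rescaling only) matches the reason the triangularity hypothesis can be dropped.
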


It is not difficult to show similar results for Wiener amalgam spaces \cite{Feichtinger_1990_Generalized}. We leave the details to the interested reader.

\section*{Declarations}

{\bf Conflict of interest.}  All authors declare that they have no conflicts of interest.

\end{document}